\def \C{\mathbb{C}}
\def \Z{\mathbb{Z}}
\def \R{\mathbb{R}}
\def \GL{\operatorname{GL}}
\def \Sp{\operatorname{Sp}}
\def \SO{\operatorname{SO}}
\def \Aut{\operatorname{Aut}}
\def \k{\textbf{k}}
\def \B{\mathfrak{B}}
\newcommand*\abs[1]{\left\lvert#1\right\rvert}%absolute value
\newcommand*\dotpro[2]{\langle{#1} , {#2}\rangle}%inner product
\theoremstyle{plain}
\newtheorem{theorem}{Theorem}[section]
\newtheorem{lemma}[theorem]{Lemma}
\newtheorem{prop}[theorem]{Proposition}
\newtheorem{corollary}[theorem]{Corollary}
\theoremstyle{definition}
\newtheorem{example}[theorem]{Example}
\newtheorem{definition}[theorem]{Definition}
\newtheorem{remark}[theorem]{Remark}
\newtheorem{problem}[theorem]{Problem}
\begin{document}

\title{Toric principal bundles, Tits buildings and reduction of structure group}

\author{Shaoyu Huang}
\address{Department of Mathematics, University of Pittsburgh,
Pittsburgh, PA, USA.}
\email{shh123@pitt.edu}

\author{Kiumars Kaveh}
\address{Department of Mathematics, University of Pittsburgh, Pittsburgh, PA, USA.}
\email{kaveh@pitt.edu}

%\begin{abstract}
%\end{abstract}

%\thanks{The first author is partially supported by a
%Simons Foundation Collaboration Grants for Mathematicians (Grant ID: 210099) and a National Science Foundation Grant 
%(Grant ID: 1200581).}

%\keywords{} 
%\subjclass[2010]{Primary:}
\date{\today}

\begin{abstract}
A toric principal $G$-bundle is a principal $G$-bundle over a toric variety together with a torus action commuting with the $G$-action. In \cite{Kaveh-Manon}, extending the Klyachko classification of toric vector bundles, toric principal bundles are classified using \emph{piecewise linear maps} to the (extended) Tits building of $G$. In this paper, we use the classification in \cite{Kaveh-Manon} to give a description of the (equivariant) automorphism group of a toric principal bundle as well as a simple criterion for (equivariant) reduction of structure group, recovering results of Dasgupta et al in \cite{Dasgupta}. Finally, motivated by the equivariant splitting problem for toric principal bundles, we introduce the notion of \emph{Helly's number} of a building and pose the problem of giving sharp upper bounds for Helly's number of Tits buildings of semisimple algebraic groups $G$.    
\end{abstract}

\maketitle
\tableofcontents

\section{Introduction}
Throughout $\k$ denotes an algebraically closed field. Let $G$ be a linear algebraic group over $\k$. Also let $T$ denote a torus over $\k$ and $X_\Sigma$ the $T$-toric variety corresponding to a fan $\Sigma$. A \emph{toric principal $G$-bundle} $\mathcal{P}$ over $X_\Sigma$ is a principal $G$-bundle over $X_\Sigma$ together with a $T$-action on $\mathcal{P}$, lifting its action on $X_\Sigma$, such that the $T$-action and the $G$-action commute. The (isomorphism classes of) toric principal $\GL(r)$-bundles are in one-to-one correspondence with the (isomorphism classes of) rank $r$ toric vector bundles. %In \cite{Biswas} toric principal bundles are classified with certain data of cocycles and homomorphisms.

Throughout the paper we fix a point $x_0$ in the open torus orbit in $X_\Sigma$. It gives an identification of the open orbit with the torus $T$. 
By a \emph{framed} toric principal bundle $(\mathcal{P}, p_0)$ we mean a toric principal bundle $\mathcal{P}$ together with the choice of a point $p_0$ in the fiber $\mathcal{P}_{x_0}$. This choice gives an identification of the fiber $\mathcal{P}_{x_0}$ with the group $G$. 

When $G$ is a reductive algebraic group, in \cite{Kaveh-Manon}, the authors give a classification of framed toric principal $G$-bundles on $X_\Sigma$ in terms of \emph{piecewise linear maps} from $|\Sigma|$, the support of $\Sigma$, to $\tilde{\B}(G)$, \emph{the cone over the Tits building of $G$} (see \cite[Theorem 2.2 and Theorem 2.4]{Kaveh-Manon}). This classification is in the spirit of the Klyachko classification of toric vector bundles (\cite{Klyachko}). 
In \cite{Biswas} toric principal bundles are classified with certain data of cocycles and homomorphisms. The classification in \cite{Biswas} is in the spirit of the Kaneyama classification of toric vector bundles (\cite{Kaneyama}).

In \cite{Dasgupta}, the authors use the Kaneyama type classification in \cite{Biswas}, to obtain interesting results on (equivariant) automorphism group, (equivariant) reduction of structure group and stability of toric principal bundles.  
In the present short paper, we use the classification in \cite{Kaveh-Manon} to give short proofs for some of the results in \cite{Dasgupta}. The following is a more specific description of the content of the paper:
\begin{itemize}
\item We give a short proof of a result of Dasgupta et al (\cite[Proposition 5.1]{Dasgupta}) describing the equivariant automorphism group of a (non-framed) toric principal bundle, as an intersection of certain parabolic subgroups of $G$ (Theorem \ref{th-auto-gp}).
\item We give a simple criterion for the equivariant reduction of structure group of a toric principal bundle (see Definition \ref{def-eq-str-gp} and Theorem \ref{th-cri-red-st-gp}). More precisely, we show that a toric principal $G$-bundle has an equivariant reduction of structure group to a closed subgroup $K$, if and only if, for some choice of a frame $p_0$, the image of the corresponding piecewise linear map $\Phi: |\Sigma| \to \tilde{\B}(G)$ lies in $\tilde{\B}(K)$.  
As corollaries we recover the results in \cite{Dasgupta} regarding reduction of structure group and splitting of toric principal bundles (see Corollary \ref{cor-eq-split}, Corollary \ref{cor-split-K-G}). 
\item We introduce the notion of \emph{Helly's number} of a building and pose the problem of finding sharp upper bounds for it (see Definition \ref{def-helly-gp}). For the Tits building of a group $G$, this Helly's number is directly related to the problem of splitting of toric principal $G$-bundles over projective spaces (Corollary \ref{cor-proj-split}).   
\end{itemize}

\bigskip
\noindent{\bf Acknowledgement:} The second author is supported by the National Science Foundation grant (DMS-2101843). We thank Roman Fedorov, Mainak Poddar, Jyoti Dasgupta, Bivas Khan, Christopher Manon and Sam Payne for useful discussions and comments.

\section{Preliminaries on Tits buildings}
\subsection{Tits building of a linear algebraic group}
In this section we review some basic facts that we need about the Tits buildings associated to linear algebraic groups. 

A \emph{building} is a pair $(\Delta,\mathcal{A})$ consisting of a simplicial complex $\Delta$ and a family $\mathcal{A}$ of subcomplexes $A$ (\emph{apartments}) satisfying certain conditions. Readers can find the general definition of building in the appendix (Definition \ref{def-bldg}).

To a linear algebraic group $G$ over a field $\k$ there corresponds a building called \emph{Tits building} of $G$. We denote it by $\Delta(G)$. The set of simplices in $\Delta(G)$ is the set of parabolic subgroups of $G$ ordered by reverse inclusion. The apartments in $\Delta(G)$ correspond to maximal tori in $G$. For a maximal torus $H \subset G$, the corresponding apartment consists of parabolic subgroups containing $H$. Clearly, Borel subgroups correspond to the maximal simplices, i.e. chambers, in $\Delta(G)$. Since every parabolic subgroup contains the solvable radical $R(G)$ of $G$, $\Delta(G)$ and $\Delta(G/R(G))$ are isomorphic as simplicial complexes. 
%are the same.

\begin{example}[Tits building of $\GL(r)$]
Consider $G = \GL(r)$. Any parabolic subgroup $P$ in $\GL(r)$ is the stabilizer of a flag $F_\bullet=(\{0\}=F_0 \subsetneqq F_1\subsetneqq \cdots \subsetneqq F_k=\C^r)$. This gives a one-to-one correspondence between the simplices in the Tits building of $\GL(r)$ and flags in $\C^r$. In particular, Borel subgroups are stabilizers of complete flags and correspond to chambers in $\Delta(\GL(r))$. A \emph{frame} $L$ in $\C^r$ is a direct sum decomposition of $\C^r = \bigoplus_{i=1}^r L_i$ into one-dimensional subspaces $L_i$. In other words, a \emph{frame} is an equivalence class of vector spaces bases up to scaling basis elements by non-zero scalars. We say that a flag $F_\bullet$ is \emph{adapted to a frame $L$} if each subspaces $F_i$ is spanned by some of the $L_j$. 
The apartments in the Tits building of $\GL(r)$ correspond to frames in $\C^r$. The apartment corresponding to a frame $L$ consists of all the flags adapted to it. 
\end{example}

\begin{example}[Tits building of $\Sp(2r)$]
Consider $G=\Sp(2r)\subset \GL(2r)$. We denote by $\langle \cdot, \cdot \rangle$ the standard skew symmetric bilinear form $\sum_i x_i \wedge y_i$ on $\C^{2r}$. We call a flag $F_\bullet = \left( \{0\}=F_0 \subsetneqq F_1\subsetneqq \cdots \subsetneqq F_k=\C^{2r}\right)$, an \emph{isotropic flag} if for each $0\le j\le k$ we have
    \begin{equation*}
        F_j^\perp = F_{k-j}.
    \end{equation*}
Any parabolic subgroup of $\Sp(2r)$ is the stabilizer of an isotropic flag. 
%    \begin{equation}
%        F_\bullet = \left( \{0\} \subsetneqq F_1\subsetneqq \cdots \subsetneqq F_k=\C^{2r}\right)
%    \end{equation}
%    is a partial flag of subspaces.

We say that a basis $B=\{e_1,\cdots, e_r, f_1,\cdots, f_r\}$ for $\C^{2r}$ is a \emph{normal basis} if the following holds:
\begin{equation*}
    \begin{split}
        \dotpro{e_i}{e_j} &= 0,\ \forall i, j\\
        \dotpro{f_i}{f_j} &= 0,\ \forall i, j\\
        \dotpro{e_i}{f_i} &= 1,\ \forall i\\
        \dotpro{e_i}{f_j} &= 0,\ \forall i, j, i\neq j.
    \end{split}
\end{equation*}
One knows that normal bases exist. If $B=\{e_1,\cdots, e_r, f_1,\cdots, f_r\}$ is a normal basis then $\{t_1e_1,\cdots, t_re_r, t_1^{-1}f_1,\cdots, t_r^{-1}f_r\}$, for any non-zero $t_1,\cdots,t_r$, is also a normal basis. We call a normal basis, up to multiplication by non-zero scalars $t_i$, a \emph{normal frame}. The normal frames are in one-to-one correspondence with maximal tori of $G$ and hence with apartments in $\Delta(G)$. The apartment corresponding to a normal frame $L$ consists of all the isotropic flags that are adapted to $L$. 
\end{example}

When $G$ is semisimple, the simplicial complex $\Delta(G)$ has a natural geometric realization. Namely, there is a topological space $\B(G)$ together with a triangulation in which simplices in the triangulation (which are subsets of $\B(G)$ homeomorphic to standard simplices) are in one-to-one correspondence with the simplices in $\Delta(G)$ and intersect according to how simplices in $\Delta(G)$ intersect. It is constructed as follows. For each maximal torus $H \subset G$ let $\Lambda^\vee(H)$ be its cocharacter lattice and let $\Lambda^\vee_\R(H)= \Lambda^\vee(H) \otimes_\Z \R$. The apartment corresponding to $H$ is the triangulation of the unit sphere in 
$\Lambda^\vee_\R(H)$ obtained by intersecting it with the Weyl chambers and their faces. Two simplices, in different apartments, are glued together if the corresponding faces represent the same parabolic subgroup in $G$. 

\begin{definition}[Geometric realization of the Tits building]  \label{def-B(G)}
The topological space $\B(G)$ is obtained by gluing the unit spheres in the $\Lambda^\vee_\R(H)$, for all maximal tori $H$, along their common simplices. 
\end{definition}

While in our notation, we distinguish between the building as an abstract simplicial complex, i.e. $\Delta(G)$, and as a topological space, i.e. $\B(G)$, by abuse of terminology we refer to both $\Delta(G)$ and $\B(G)$ as the Tits building of $G$. 

\begin{definition}[Cone over the Tits building of a semisimple group]   \label{def-tilde-B(G)}
Let $G$ be semisimple. Similar to the construction of $\B(G)$, we  construct the topological space $\tilde{\B}(G)$ by gluing the vector spaces $\Lambda^\vee_\R(H)$, along their common faces of Weyl chambers. We think of $\tilde{\B}(G)$ as the cone over $\B(G)$ and call it the \emph{cone over the Tits building of $G$}.
\end{definition}

%Let $H$ be a maximal torus in $G$ with cocharacter lattice $\Lambda^\vee(H)$. %denotes the cocharacter lattice of $H$. 
%There is a one-to-one correspondence between
%the faces of Weyl chambers in $\Lambda^\vee_\R(H)=\Lambda^\vee(H)\otimes_\Z\R$ and the parabolic subgroups containing $H$. We can glue the real vector spaces $\Lambda_\R^\vee(H)$ (respectively the lattices $\Lambda^\vee(H)$), for all maximal tori $H$,
%along the faces corresponding to the same parabolic subgroups. We denote the resulting space by $\tilde{\mathfrak{B}}(G)$ (respectively $\tilde{\mathfrak{B}}_\Z(G)$), the \emph{cone over the Tits building} of $G$ (respectively the set of \emph{lattice points in the cone over the Tits building}).

Now let $G$ be a linear algebraic group and let $G_{\textup{ss}} = G / R(G)$ be the semisimple quotient of $G$. The previous construction in the semisimple case works in this case as well and we can define $\tilde{\B}(G)$ (respectively $\B(G)$) to be the topological space obtained by gluing the vector spaces $\Lambda^\vee_\R(H)$ (respectively unit spheres in the $\Lambda^\vee_\R(H)$), for all maximal tori $H \subset G$, along their common faces of Weyl chambers (respectively intersections of common faces with the unit spheres). 
When $G$ is reductive, the topological space $\tilde{\B}(G)$ is the Cartesian product of $\tilde{\B}(G_{\textup{ss}})$ with the real vector space $\Lambda^\vee(Z) \otimes_\Z \R$, where $Z=Z(G)^\circ$ is the connected component of the identity in the center of $G$. 

\begin{definition}[Extended Tits building of a linear algebraic group]      \label{def-ext-bldg}
For a linear algebraic group $G$, we refer to $\tilde{\B}(G)$ (above) as the \emph{extended Tits building} of $G$. Also, for a maximal torus $H$, we refer to $\Lambda^\vee_\R(H)$ as the \textit{cone} over the apartment of $H$ and denote it by $\tilde{A}_H$. When $G$ is semisimple, the extended Tits building $\tilde{\B}(G)$ is the cone over the Tits building of $G$.

We denote by $\tilde{\B}_\Z(G)$ the subset of $\tilde{\B}(G)$ obtained by gluing the lattices $\Lambda^\vee(H)$, for all maximal tori $H$,
and call it the set of \emph{lattice points in the extended Tits building of $G$}.
\end{definition}

\begin{remark}
Our choice of terminology \emph{an extended Tits building} is motivated by a similar term, namely  \emph{an extended Bruhat-Tits building}, from the theory of Bruhat-Tits buildings for algebraic groups over valued fields (see \cite{Tits} as well as \cite[Remark 1.23]{RTW}).
\end{remark}

We will see in Section \ref{subsec-1-para-subgp} that the set $\tilde{\B}_\Z(G)$ of lattice points in $\tilde{\B}(G)$ can be identified with the set of one-parameter subgroups of $G$ modulo certain equivalence relation (Definition \ref{def-1-para-equiv} and Proposition \ref{prop-tilde-B-one-para}).

%\begin{definition}
%    Let $G$ be a linear algebraic group. For any maximal torus $H\subset G$, let $A_H=\Lambda^\vee(H)\otimes_\Z\R\setminus\{0\}/\R_{>0}$, where $\Lambda^\vee(H)$ is the cocharacter lattice of $H$. Consider the
%    topological space $\mathfrak{B}(G)$ obtained by gluing $A_H$, for all maximal tori $H\subset G$, along faces corresponding to the same parabolic subgroups. We call $\mathfrak{B}(G)$ the \emph{underlying space of the Tits building of $G$}.
%\end{definition}

%\begin{remark}
%    When $G$ is semisimple. $\Delta(G)$ can be identified with $\mathfrak{B}(G)$ in the sense that each simplex in $\Delta(G)$ can be identified with a subset
%    of $\mathfrak{B}(G)$ that is homeomorphic to a standard simplex.
%\end{remark}

\subsection{One-parameter subgroups and Tits building} 
\label{subsec-1-para-subgp}
In this section, following \cite[Section 1.3]{Kaveh-Manon}, we present a natural way to realize the extended Tits building of $G$ in terms of one-parameter subgroups of $G$. More precisely, we see that the set of lattice points $\tilde{\B}_\Z(G)$ in $\tilde{\B}(G)$ can naturally be identified with certain equivalence classes of one-parameter subgroups in $G$ (Proposition \ref{prop-tilde-B-one-para}). For details and proofs we refer the reader to \cite[Section 1.3]{Kaveh-Manon}. This construction of the Tits building of a linear algebraic group from one-parameter subgroups also appears, in slightly different form, in \cite[Section 2.2]{Mumford}.

\begin{definition}   \label{def-1-para-equiv}
Let $\lambda_1,\lambda_2$ be algebraic one-parameter subgroups of $G$. We say that $\lambda_1$ is \emph{equivalent} to $\lambda_2$ and write $\lambda_1\sim\lambda_2$ if $\displaystyle\lim_{s\to0}\lambda_1(s)\lambda_2(s)^{-1}$ exists in $G$.
\end{definition}

It is easy to see this is indeed an equivalence relation. 

\begin{definition}[Parabolic subgroup associated to a one-parameter subgroup]
For a one-parameter subgroup $\lambda:\mathbb{G}_m\to G$, let
    \begin{equation*}
        P_\lambda = \{g\in G\mid \lim_{s\to 0}\lambda(s)g\lambda(s)^{-1}\text{ exists in } G\}.
    \end{equation*}
One shows that $P_\lambda$ is a parabolic subgroup in $G$. 
\end{definition}

Alternatively, $P_\lambda$ can be described in terms of the equivalence relation $\sim$ (see \cite[Proposition 1.8]{Kaveh-Manon}): 
\begin{equation}  \label{equ-P-lambda}
P_\lambda = \{ g \in G \mid g \lambda g^{-1} \sim \lambda\}.
\end{equation}
It is straightforward to check that if $\lambda_1 \sim \lambda_2$ then $P_{\lambda_1} = P_{\lambda_2}$. Thus to each equivalence class of one-parameter subgroups there corresponds a parabolic subgroup. One also shows that, for a maximal torus $H \subset G$, no two one-parameter subgroups in $\Lambda^\vee(H)$ are equivalent. Moreover, if a one-parameter subgroup $\lambda \in \Lambda^\vee(H)$ lies in the relative interior of a face of a Weyl chamber, the parabolic subgroup $P_\lambda$ is exactly the parabolic subgroup corresponding to this face. Putting these facts together one obtains the following (\cite[Corollary 1.11]{Kaveh-Manon}).

\begin{prop}  \label{prop-tilde-B-one-para}
The set $\tilde{\mathfrak{B}}_\Z(G)$ can naturally be identified with the set of equivalence classes of one-parameter subgroups of $G$.
\end{prop}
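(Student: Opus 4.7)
The plan is to construct a natural bijection $\Psi$ between equivalence classes of one-parameter subgroups modulo $\sim$ and the set of lattice points $\tilde{\B}_\Z(G)$, by assembling the ingredients already collected in this subsection: the existence of the parabolic $P_\lambda$ attached to each one-parameter subgroup $\lambda$, its characterization via $\sim$ in (\ref{equ-P-lambda}), the fact that no two distinct elements of a single $\Lambda^\vee(H)$ are $\sim$-equivalent, and the identification of $P_\lambda$ with the Weyl chamber face containing $\lambda$ in its relative interior.

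First I would define $\Psi$ as follows. For any one-parameter subgroup $\lambda : \mathbb{G}_m \to G$, the Zariski closure of its image is a connected diagonalizable subgroup of $G$, hence lies in some maximal torus $H$, so $\lambda \in \Lambda^\vee(H) \subset \tilde{A}_H$, and I let $\Psi(\lambda)$ be its class in $\tilde{\B}_\Z(G)$ under the gluing from Definition \ref{def-ext-bldg}. The first thing to check is that $\Psi(\lambda)$ does not depend on the choice of maximal torus $H$ containing the image of $\lambda$, which follows since any two such choices yield copies of $\lambda$ sitting in a common face of Weyl chambers that are identified by the gluing. Next, to show that $\Psi$ descends to $\sim$-classes, I would argue that $\lambda_1 \sim \lambda_2$ implies, via (\ref{equ-P-lambda}), $P_{\lambda_1} = P_{\lambda_2} =: P$; after conjugating $\lambda_2$ by an element of $P$ into the apartment $\tilde{A}_{H_1}$ (which is possible since all maximal tori in a Levi of $P$ are conjugate), the two cocharacters land in the same apartment and the same face of a Weyl chamber, and so their classes coincide in $\tilde{\B}_\Z(G)$.

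Surjectivity of $\Psi$ is immediate from the construction, since every lattice point of $\tilde{\B}_\Z(G)$ is represented by a cocharacter in some $\Lambda^\vee(H)$. For injectivity, suppose $\Psi(\lambda_1) = \Psi(\lambda_2)$; then $\lambda_1, \lambda_2$ lie in the relative interiors of Weyl chamber faces in apartments $\tilde{A}_{H_1}, \tilde{A}_{H_2}$ that are identified under the gluing, hence represent a common parabolic $P$, and by the face-interior property recalled above $P_{\lambda_1} = P = P_{\lambda_2}$. The main obstacle is upgrading this equality of parabolic subgroups to the existence of the limit $\lim_{s \to 0} \lambda_1(s) \lambda_2(s)^{-1}$ required for $\lambda_1 \sim \lambda_2$. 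My plan for this final step is to conjugate $\lambda_2$ into $\tilde{A}_{H_1}$ by an element of $P$, invoke the fact that distinct elements of $\Lambda^\vee(H_1)$ are never $\sim$-equivalent together with the fact that conjugation by $P$ preserves the class in $\tilde{\B}_\Z(G)$ to conclude that the conjugated $\lambda_2$ must coincide with $\lambda_1$ on the nose, and then transport the resulting equivalence back along the conjugation via (\ref{equ-P-lambda}).
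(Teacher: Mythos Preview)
The paper does not actually supply a proof of this proposition; it only assembles the facts you list (that $\lambda_1 \sim \lambda_2$ implies $P_{\lambda_1}=P_{\lambda_2}$, that no two cocharacters of a fixed maximal torus are equivalent, and that $P_\lambda$ equals the parabolic attached to the Weyl face containing $\lambda$) and then cites \cite[Corollary 1.11]{Kaveh-Manon}. So there is no in-text argument to compare yours against; your outline is in exactly the spirit of the sketch the paper provides.

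That said, two of your steps are not quite right as written. In showing that $\Psi$ descends to $\sim$-classes you conjugate $\lambda_2$ by some $p\in P$ into $\tilde A_{H_1}$ and then assert that the two cocharacters ``land in the same apartment and the same face of a Weyl chamber, and so their classes coincide in $\tilde{\B}_\Z(G)$.'' Lying in the same face does not force two lattice points to be equal. What you actually need here is that $p\lambda_2 p^{-1}\sim\lambda_2\sim\lambda_1$ (the first from \eqref{equ-P-lambda}, since $p\in P=P_{\lambda_2}$), and then the fact that distinct elements of $\Lambda^\vee(H_1)$ are never $\sim$-equivalent forces $p\lambda_2 p^{-1}=\lambda_1$ on the nose. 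You must then still argue that this equality gives $\Psi(\lambda_1)=\Psi(\lambda_2)$, i.e.\ that conjugation by an element of $P$ realizes the gluing of the two faces; this is true, but it requires making precise how points within faces corresponding to the same parabolic are identified, something the paper states only at the level of simplices.

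In your injectivity argument you invoke ``distinct elements of $\Lambda^\vee(H_1)$ are never $\sim$-equivalent'' to conclude that the conjugated $\lambda_2$ equals $\lambda_1$. But at that stage you only know $\Psi(p\lambda_2 p^{-1})=\Psi(\lambda_1)$, not $p\lambda_2 p^{-1}\sim\lambda_1$; the latter is precisely the injectivity you are trying to establish, so the appeal is circular. The correct tool is that the natural map $\Lambda^\vee(H_1)\to\tilde\B_\Z(G)$ is itself injective (the gluing identifies points across apartments, never within one), from which $p\lambda_2 p^{-1}=\lambda_1$ follows directly; then \eqref{equ-P-lambda} yields $\lambda_1\sim\lambda_2$ as desired.
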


\begin{remark}
The above realization of the (extended) Tits building of $G$ in terms of equivalence classes of one-parameter subgroups (Proposition \ref{prop-tilde-B-one-para}) is analogous to the description of the Tits building of a symmetric space as the set of equivalence classes of geodesics (see \cite[Section 3]{Ji}).
\end{remark}

\begin{example}
Consider $G=\text{Sp}(2r)$. A one-parameter subgroup $\lambda:\mathbb{G}_m\to G$ is given by a diagonal matrix $$\text{diag}(t^{v_1},\cdots,t^{v_r},t^{-v_r},\cdots,t^{-v_1}),\ v_i\in\mathbb{Z}$$ under some ordered normal basis $\{e_1,\cdots,e_r, f_r,\cdots, f_1\}$. After reordering and switching, we may assume $v_1\ge\cdots\ge v_r\ge 0\ge-v_r\cdots\ge -v_1$, which will still gives us a normal basis. For
$i=1,\cdots, r$, let $v_{r+i}=-v_{r+1-i}$. Consider indices $i_1,\cdots,i_k=2r$ such that $v_1=\cdots=v_{i_1}>v_{i_1+1}=\cdots=v_{i_2}>\cdots>v_{i_{k-1}+1}=\cdots=v_{i_k}=v_{2r}$. For $j=1,\cdots,k$, we let $c_j=v_{i_j}$ and $F_j=V_{i_j}$ which is spanned by first $i_j$ vectors in ordered normal basis. Then we get an isotropic flag $F_\bullet = \left( \{0\} = F_0 \subsetneqq F_1\subsetneqq \cdots \subsetneqq F_k=\C^{2r}\right)$ and
a labeling $c_\bullet=(c_1>\cdots>c_k)$ with $c_j=-c_{k+1-j}$. We call $(F_\bullet, c_\bullet)$, where $c_\bullet=(c_1>\cdots>c_k)$ is a sequence with $c_j=-c_{k+1-j}$, a \emph{labeled isotropic flag}. The extended Tits building $\tilde{\mathfrak{B}}(G)$ can be realized as the collection of labeled isotropic flags.
\end{example}

A homomorphism of linear algebraic groups naturally induces a map between the corresponding extended Tits buildings. The above realization of the extended Tits building in terms of equivalence classes of one-parameter subgroups gives an easy way to construct this map.  
%\begin{remark}\label{eq:5}

\begin{definition}  \label{def-alpha-hat}
Let $\alpha: G\to G'$ be a homomorphism of linear algebraic groups. If $\lambda:\mathbb{G}_m\to G$ is a one-parameter subgroup of $G$, then $\alpha\circ\lambda$ is a one-parameter subgroup of $G'$. The map $\lambda \mapsto \alpha\circ\lambda$ respects the equivalence classes and thus gives a well-defined map $\hat{\alpha}: \tilde{\mathfrak{B}}_\Z(G)\to\tilde{\mathfrak{B}}_\Z(G')$. This extends to a map $\hat{\alpha}: \tilde{\B}(G) \to \tilde{\B}(G')$.
The map $\hat{\alpha}$ sends an extended apartment for $G$ to an extended apartment for $G'$. This is because the image of a torus in $G$ is a torus in $G'$ and every torus lies in a maximal torus. 
\end{definition}

Finally, we use the above to make the observation that the extended Tits building does not change under semidirect product with a unipotent group. In particular, the extended Tits building of a parabolic subgroup and its Levi subgroup coincide. 

\begin{prop}   \label{prop-Levi}
For a linear algebraic group $G$, suppose there exist subgroups $L$, $U \subset G$ such that $G=L\ltimes U$ (in particular, $U$ is normalized by $L$). If $U$ is unipotent then ${\tilde{\B}}(L)$ and $\tilde{\mathfrak{B}}(G)$ can be identified via the map $\hat{\iota}$ where $\iota: L \to G$ is the inclusion.  
\end{prop}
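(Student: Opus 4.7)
The plan is to show that $\hat{\iota}: \tilde{\B}(L) \to \tilde{\B}(G)$ is a bijection compatible with the simplicial and topological structure, using the realization of lattice points in $\tilde{\B}$ as equivalence classes of one-parameter subgroups (Proposition \ref{prop-tilde-B-one-para}). Injectivity is immediate: the projection $\pi: G \to G/U \cong L$ satisfies $\pi \circ \iota = \operatorname{id}_L$, so by the functoriality of Definition \ref{def-alpha-hat}, $\hat{\pi} \circ \hat{\iota} = \operatorname{id}_{\tilde{\B}(L)}$, forcing $\hat{\iota}$ to be injective.

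The substantive step is surjectivity on lattice points. Given a one-parameter subgroup $\mu: \mathbb{G}_m \to G$, its image lies in some maximal torus $H \subset G$. Because $H$ consists of semisimple elements and $U$ of unipotent ones, $H \cap U = \{1\}$, so $\pi|_H$ is injective; a rank comparison then shows that $H_L := \pi(H) \subset L$ is a maximal torus of $L$. Next, $\pi^{-1}(H_L) = H_L \cdot U$ is a connected solvable subgroup of $G$ with unipotent radical $U$, and both $H$ and $\iota(H_L)$ are maximal tori in it. By the classical theorem that maximal tori of a connected solvable group are conjugate by an element of its unipotent radical, there exists $u \in U$ with $H = u\, \iota(H_L)\, u^{-1}$. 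Letting $\lambda$ be the unique one-parameter subgroup of $H_L$ with $\mu = u\, (\iota \circ \lambda)\, u^{-1}$, I want to conclude $\mu \sim \iota \circ \lambda$, which by (\ref{equ-P-lambda}) is equivalent to $u \in P_{\iota \circ \lambda}$. But $U$ is a connected, normal, solvable subgroup of $G$, hence $U \subset R(G)$; and every parabolic subgroup of $G$ contains $R(G)$. Thus $u \in P_{\iota \circ \lambda}$, so $[\mu] = \hat{\iota}([\lambda])$ as required.

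To promote this to the full extended Tits buildings, observe that $\hat{\iota}$ carries each extended apartment $\tilde{A}_{H_L}$, for $H_L$ a maximal torus of $L$, linearly onto $\tilde{A}_{\iota(H_L)}$; the argument above shows that every extended apartment of $G$ coincides, as a subset of $\tilde{\B}(G)$, with one of this form, and the gluing along faces of Weyl chambers is preserved because those faces correspond to common parabolic subgroups. The main obstacle is the step that forces conjugation by $u \in U$ to respect the equivalence $\sim$; this is exactly where the unipotency of $U$ enters, via $U \subset R(G) \subset P_{\iota \circ \lambda}$.
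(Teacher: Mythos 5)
Your argument takes a genuinely different route from the paper's. The paper simply writes $\gamma(s)=\gamma_L(s)\gamma_U(s)$ using the semidirect product decomposition and argues directly that the unipotent factor has a limit as $s\to 0$, which immediately gives $\gamma\sim\gamma_L$; no conjugacy theorems or parabolics enter. You instead conjugate the maximal torus $H\ni\operatorname{im}(\mu)$ to $\iota(H_L)$ by an element $u\in U$ and then try to show the conjugator lies in $P_{\iota\circ\lambda}$.

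That last step is where there is a real gap. Your chain of containments $U\subset R(G)\subset P_{\iota\circ\lambda}$ rests on two ingredients: that $P_{\iota\circ\lambda}$ is a parabolic subgroup of $G$, and that every parabolic subgroup contains $R(G)$. Both are standard for \emph{reductive} $G$, but here $G=L\ltimes U$ is not reductive as soon as $U\neq 1$ — and the instability subgroup $P_\lambda$ need not be parabolic (indeed need not contain $R(G)$ or even $U$) for a non-reductive group. For a concrete test, take $G=B=H\ltimes U$ a Borel of $\SL_2$ with $H=\mathbb{G}_m$ and $U=\mathbb{G}_a$, and let $\lambda$ be the anti-dominant cocharacter of $H$: then conjugation by $\lambda(s)$ acts on $\operatorname{Lie}(U)$ with a strictly negative weight, so $\lim_{s\to 0}\lambda(s)\,u\,\lambda(s)^{-1}$ does not exist for $u\neq 1$, and one finds $P_\lambda=H$. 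Thus $R(G)=G\not\subset P_\lambda$ and $U\not\subset P_\lambda$, so the containment you invoke fails exactly in the setting the proposition is about. This means the step you flag as ``exactly where the unipotency of $U$ enters'' is not actually justified by the reasoning given; it would need to exploit the special choice of $u$ (the conjugator arising from the solvable-group conjugacy theorem) rather than the blanket claim $U\subset P_{\iota\circ\lambda}$. The paper's direct computation with $\gamma_U$ avoids this issue entirely, which is why it is the shorter and (modulo being careful with the order of factors) the more robust route here. The rest of your argument — injectivity via the retraction $\pi$, identifying $H_L=\pi(H)$ as a maximal torus of $L$, and conjugacy of maximal tori in the connected solvable group $\pi^{-1}(H_L)$ by an element of its unipotent part — is fine.
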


\begin{proof}
Since $L$ is a closed subgroup, it is straightforward to see that $\hat{\iota}: \tilde{\B}(L) \to \tilde{\B}(G)$ is an embedding. It remains to show $\hat{\iota}$ is surjective. Let $\gamma:\mathbb{G}_m\to G$ be a one-parameter subgroup in $G$. Since $G=L\ltimes U$, there exist a one-parameter subgroup $\gamma_L:\mathbb{G}_m\to L\simeq G/U$ and a morphism $\gamma_U: \mathbb{G}_m \to U$ such that $\gamma(s)=\gamma_L(s)\gamma_U(s),\ \forall s\in\mathbb{G}_m$.
Since the unipotent group $U$ can be embedded in $\GL(r)$ as a subvariety of upper triangular matrices with $1$'s on the diagonal, 
$\displaystyle\lim_{s\to0}\gamma_U(s)$ exists in $U$. Therefore,
    \begin{equation*}
        \lim_{s\to0}\gamma(s)\gamma_L^{-1}(s)=\lim_{s\to0}\gamma_U(s)\in U\subset G.
    \end{equation*}
    This shows $\gamma\sim\gamma_L$ and hence $\hat{\iota}$ is surjective. %{\color{red} Why is it injective?}. Hence $\hat{\iota}: \tilde{\mathfrak{B}}(L) \to \tilde{\mathfrak{B}}(G)$ is a bijection.
\end{proof}

\section{Preliminaries on toric principal bundles}
In this section we review the classification of (framed) toric principal bundles in \cite{Kaveh-Manon}. Let $T \cong \mathbb{G}_m^n$ denote an $n$-dimensional algebraic torus over an algebraically closed field $\k$. We let $M$ and $N$ denote its character and cocharacter lattices respectively. We also denote by $M_\R$ and $N_\R$ the $\R$-vector spaces spanned by $M$ and $N$. %For cone $\sigma \in N_\R$ let $M_\sigma$ be the quotient lattice
%$M_\sigma = M / (\sigma^\perp \cap M).$
Let $\Sigma$ be a (finite rational polyhedral) fan in $N_\R$ and let $X_\Sigma$ be the corresponding toric variety. Also $U_\sigma$ denotes the invariant affine open subset in $X_\Sigma$ corresponding to a cone $\sigma \in \Sigma$. We denote the support of $\Sigma$, that is the union of all the cones in $\Sigma$, by $|\Sigma|$. For each $i$, $\Sigma(i)$ denotes the subset of $i$-dimensional cones in $\Sigma$. In particular, $\Sigma(1)$ is the set of rays in $\Sigma$. For each ray $\rho \in \Sigma(1)$ we let $v_\rho$ be the primitive vector along $\rho$, i.e. $v_\rho$ is the shortest non-zero integral vector on $\rho$.

Throughout the paper we fix a point $x_0$ in the open torus orbit in $X_\Sigma$. It gives an identification of the torus $T$ with the open orbit via $t \mapsto t \cdot x_0$. 

We start by recalling the notion of a principal bundle. Let $G$ be an algebraic group, a \emph{principal $G$-bundle over a variety $X$} is a fiber bundle $\mathcal{P}$ over $X$ with an action of $G$ such that $G$ preserves each fiber and the action is free and transitive. Throughout, we take the action of $G$ on $\mathcal{P}$ to be a \emph{right} action. 

%\begin{definition}   \label{def-morphism-pb}
Let $G, G'$ be algebraic groups and $\mathcal{P}$ (respectively $\mathcal{P}'$) be a principal $G$-bundle (respectively $G'$-bundle) over $X$. A \emph{morphism of principal bundles with respect to a homomorphism of algebraic groups $\alpha:G\to G'$} is a bundle map $F:\mathcal{P}\to\mathcal{P}'$
 such that
    \begin{equation*}
        F(z\cdot g)=F(z)\cdot\alpha(g), \ \forall z\in\mathcal{P}, \forall g\in G.
    \end{equation*}
We refer to a morphism between toric principal $G$-bundles, with respect to the identity homomorphism $G \to G$, simply as a \emph{morphism of principal $G$-bundles}. We note that any morphism of principal $G$-bundles is an isomorphism.

\begin{definition}[Toric principal bundle]   \label{def-tpb}
    Let $X_\Sigma$ be the toric variety associated to a fan $\Sigma$ and $G$ an algebraic group. A \emph{toric principal $G$-bundle over $X_\Sigma$} is a principal $G$-bundle $\mathcal{P}$ together with a torus action lifting that of $X_\Sigma$,
such that the $T$-action and the $G$-action on $\mathcal{P}$ commute. More precisely, 
$\forall t\in T, \forall x\in X_\Sigma, \forall z\in\mathcal{P}_x$ we have:
    \begin{align*}
        t:\mathcal{P}_x&\to\mathcal{P}_{t\cdot x}, \\
        t\cdot (z\cdot g) &= (t\cdot z)\cdot g.
    \end{align*}
    
Recall that we have fixed a point $x_0$ in the open torus orbit in $X_\Sigma$. We call a toric principal $G$-bundle $\mathcal{P}$ together with a choice of a point $p_0\in\mathcal{P}_{x_0}$ a \emph{framed toric principal $G$-bundle}.
\end{definition}

\begin{definition}   \label{def-morphism-tpb}
A \emph{morphism of toric principal bundles}  is a morphism $F$ of principal bundles (with respect to some homomorphism $\alpha$ as above) that is also $T$-equivariant.
    A \emph{morphism of framed principal bundles} $(\mathcal{P}, p_0) \to (\mathcal{P}', p_0')$ is a morphism $F$ that sends $p_0\in\mathcal{P}_{x_0}$ to
    $p_0'\in\mathcal{P}'_{x_0}$.
\end{definition}

The following is the main combinatorial gadget to classify (framed) toric principal bundles. It can be thought of as a generalization of a real-valued piecewise linear function $\varphi: |\Sigma| \to \R$. 
\begin{definition}[Piecewise linear map]
Let $G$ be a linear algebraic group with $\tilde{\B}(G)$, the extended Tits building of $G$. Let $\Sigma$ be a fan in $N_\R$, we say that a map $\Phi:\abs{\Sigma}\to\tilde{\mathfrak{B}}(G)$ is a \emph{piecewise linear map} if:
    \begin{enumerate}
        \item[(a)] For each cone $\sigma\in\Sigma$, there exists a maximal torus $H_\sigma$ (not necessarily unique) such that $\Phi(\sigma)$ lies in an extended apartment $\tilde{A}_\sigma=\Lambda_\R^\vee(H_\sigma)$.
        \item[(b)] For each cone $\sigma\in\Sigma$, the restriction $\Phi\mid_\sigma:\sigma\to\tilde{A}_\sigma$ is an $\R$-linear map.
    \end{enumerate}
We say that a piecewise linear map $\Phi$ is \emph{integral} if $\Phi$ sends lattice points to lattice points, i.e. for any $\sigma \in \Sigma$, $\Phi(\sigma \cap N) \subset \Lambda^\vee(H_\sigma)$.
\end{definition}

\begin{definition}[Equivariant triviality]
%    Let $G$ be a linear algebraic group. Let $\mathcal{P}$ be a toric principal $G$-bundle on an affine toric variety $U_\sigma$. The product variety $U_\sigma\times G$ is a trivial principal $G$-bundle where $G$ acts on each fiber by group multiplication.
%    Let $\phi_\sigma: T\to G$ be a group homomorphism. If 
%    \begin{equation*}
%        t\cdot(x, g) = (t\cdot x, \phi_\sigma(t)g),\ \forall t\in T, \forall x\in U_\sigma, \forall g\in G,
%    \end{equation*}
%    we say $U_\sigma\times G$ is \emph{equivariantly trivial}. 
 We say that a toric principal bundle $\mathcal{P}$ on an affine toric variety $U_\sigma$ is \emph{equivariantly trivial} if there exists a toric principal $G$-bundle isomorphism between $\mathcal{P}$ and $U_\sigma\times G$, where $T$ acts on $U_\sigma \times G$ via an algebraic group homomorphism $\phi_\sigma: T\to G$ by:
\begin{equation*}
        t\cdot(x, g) = (t\cdot x, \phi_\sigma(t)g),\ \forall t\in T, \forall x\in U_\sigma, \forall g\in G.
\end{equation*}
\end{definition}

\begin{definition}[Local equivariant triviality]
Let $\mathcal{P}$ be a toric principal $G$-bundle on a toric variety $X_\Sigma$. We say that $\mathcal{P}$ is \emph{locally equivariantly trivial} if for any $\sigma \in \Sigma$, the restriction $\mathcal{P}\mid_{U_\sigma}$, to the affine open chart $U_\sigma$, is equivariantly trivial.
\end{definition}

The following gives a classification of locally equivariantly trivial framed toric principal bundles in terms of piecewise linear maps (\cite[Theorem 2.4]{Kaveh-Manon}).

\begin{theorem} \label{th-KM}
Let $G$ be a linear algebraic group over $\mathbf{k}$. 
\begin{itemize}
\item[(a)] There is a one-to-one correspondence between the isomorphism classes of locally equivariantly trivial framed toric principal $G$-bundles $\mathcal{P}$ over $X_\Sigma$ and the integral piecewise linear maps $\Phi:\abs{\Sigma}\to\tilde{\mathfrak{B}}(G)$.
    
\item[(b)] Moreover, let $\alpha:G\to G'$ be a homomorphism of linear algebraic groups. Let $(\mathcal{P}, p_0)$ (respectively $(\mathcal{P}', p'_0)$) be a locally equivariantly trivial framed toric principal $G$-bundle (respectively $G'$-bundle) with corresponding 
    piecewise linear map $\Phi:\abs{\Sigma}\to\tilde{\mathfrak{B}}(G)$ (respectively $\Phi':\abs{\Sigma}\to\tilde{\mathfrak{B}}(G')$). Then there is a (necessarily unique) morphism of framed toric principal bundles $F:\mathcal{P}\to\mathcal{P}'$ with respect to $\alpha$ if and only if $\Phi' = \hat{\alpha}\circ\Phi$.
\end{itemize}
\end{theorem}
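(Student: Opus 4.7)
The overall strategy is to exploit local equivariant triviality to reduce the data of a framed toric principal bundle, on each affine chart $U_\sigma$, to an algebraic homomorphism $\phi_\sigma: T \to G$. Via the cocharacter functor, such a homomorphism corresponds exactly to a linear map $N_\R \to \Lambda_\R^\vee(H_\sigma)$, where $H_\sigma \subset G$ is any maximal torus containing the image $\phi_\sigma(T)$. Restricting to $\sigma$, this produces a candidate for $\Phi\mid_\sigma$ landing in an extended apartment. The heart of the proof is to check that these local pieces glue, on the level of the Tits building, to a well-defined piecewise linear map on $|\Sigma|$, and that the construction is reversible.

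First, I would make the local data precise. On $U_\sigma$ fix the unique $T$-equivariant trivialization $\psi_\sigma: \mathcal{P}\mid_{U_\sigma} \to U_\sigma \times G$ with $\psi_\sigma(p_0) = (x_0, e)$; the framing kills the $G$-conjugation ambiguity, so $\phi_\sigma$ is canonically determined. The induced $\Z$-linear map $N \to \Lambda^\vee(H_\sigma)$, $v \mapsto \phi_\sigma \circ \lambda_v$, extends $\R$-linearly, and its restriction to $\sigma$ is integral and linear, meeting the requirements of a piecewise linear map on $\sigma$. To see the pieces agree on faces, suppose $\tau \preceq \sigma$. The two equivariant trivializations $\psi_\sigma\mid_{U_\tau}$ and $\psi_\tau$ differ by a bundle automorphism $(x,g) \mapsto (x, h(x)g)$ with $h(x_0) = e$, and $T$-equivariance forces $h(s \cdot x_0) = \phi_\tau(s)\phi_\sigma(s)^{-1}$ for all $s \in T$. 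Regularity of $h$ on $U_\tau$ then forces, for each lattice point $v \in \tau \cap N$, that $\lim_{s \to 0}(\phi_\tau\circ\lambda_v)(s)(\phi_\sigma\circ\lambda_v)(s)^{-1}$ exists in $G$, i.e.\ $\phi_\tau\circ\lambda_v \sim \phi_\sigma \circ \lambda_v$ in the sense of Definition \ref{def-1-para-equiv}. By Proposition \ref{prop-tilde-B-one-para}, the two cocharacters determine the same point of $\tilde{\B}_\Z(G)$, so the local maps glue into a single $\Phi: |\Sigma| \to \tilde{\B}(G)$.

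For the converse, given an integral piecewise linear map $\Phi$, I would construct $(\mathcal{P}, p_0)$ by choosing, for each $\sigma$, a maximal torus $H_\sigma$ whose extended apartment contains $\Phi(\sigma)$, reading $\Phi\mid_\sigma$ as a cocharacter $\phi_\sigma: T \to H_\sigma \hookrightarrow G$, and taking $U_\sigma \times G$ with $T$-action $t \cdot (x,g) = (tx, \phi_\sigma(t)g)$ and frame $(x_0,e)$. The above compatibility, now run in reverse, supplies regular $T$-equivariant transition functions $h_{\sigma,\tau}: U_{\sigma \cap \tau} \to G$ with $h_{\sigma,\tau}(x_0) = e$; such transitions automatically satisfy the cocycle condition on triple overlaps $U_{\sigma \cap \tau \cap \nu}$, because pointwise equality on the dense open orbit $T \subset X_\Sigma$ (where it reduces to a computation with $\phi$'s) propagates to the whole overlap. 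Gluing yields the desired framed bundle, and the two constructions are mutually inverse on isomorphism classes, giving (a).

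For part (b), a morphism $F: \mathcal{P} \to \mathcal{P}'$ of framed bundles with respect to $\alpha: G \to G'$ is, under the local trivializations, a map $U_\sigma \times G \to U_\sigma \times G'$ respecting frames and $T$-actions; such a map must have the form $(x,g) \mapsto (x, \alpha(g))$ up to a correction vanishing at $x_0$, forcing $\phi'_\sigma = \alpha \circ \phi_\sigma$ on $T$. Composing with $\lambda_v$ and using the description of $\hat{\alpha}$ in Definition \ref{def-alpha-hat} gives $\Phi' = \hat{\alpha} \circ \Phi$. Conversely, this identity lets one manufacture the transition-compatible morphism. The main obstacle, and the step requiring the most care, is the overlap analysis in paragraph two: one must argue that regularity of the transition function $h$ on $U_\tau$ is not just a pointwise matching condition but is equivalent to the one-parameter-subgroup equivalence $\sim$, so that the identification of lattice points in $\tilde{\B}(G)$ supplied by Proposition \ref{prop-tilde-B-one-para} precisely captures the gluing data of the bundle.
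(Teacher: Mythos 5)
The paper does not prove Theorem \ref{th-KM}; it is quoted verbatim from \cite[Theorem 2.4]{Kaveh-Manon}, so there is no in-paper proof to compare against. Your sketch follows the expected line of argument for that result: extract a homomorphism $\phi_\sigma: T \to G$ from a framed equivariant trivialization over each $U_\sigma$, pass to cocharacters landing in $\Lambda^\vee(H_\sigma)$, and assemble a map to $\tilde{\B}(G)$ via the identification of $\tilde{\B}_\Z(G)$ with equivalence classes of one-parameter subgroups (Proposition \ref{prop-tilde-B-one-para}). The forward gluing analysis -- that $T$-equivariance of the transition $h$ forces $h(t\cdot x_0) = \phi_\tau(t)\phi_\sigma(t)^{-1}$, and that regularity of $h$ on $U_\tau$ yields $\phi_\tau\circ\lambda_v \sim \phi_\sigma\circ\lambda_v$ for $v \in \tau \cap N$ -- is correct and is indeed the mechanism by which the bundle data becomes a point of $\tilde{\B}(G)$.

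However, several steps are overstated. First, the claim that ``the framing kills the $G$-conjugation ambiguity, so $\phi_\sigma$ is canonically determined'' is too strong: two framed equivariant trivializations over $U_\sigma$ can differ by $(x,g)\mapsto(x,k(x)g)$ with $k(x_0)=e$ but $k$ nonconstant, and then $\phi_\sigma \ne \phi'_\sigma$ in general. What is canonical is only the class $[\phi_\sigma\circ\lambda_v] \in \tilde{\B}_\Z(G)$ -- precisely because regularity of $k$ forces $\phi'_\sigma\circ\lambda_v \sim \phi_\sigma\circ\lambda_v$, the same computation you run for gluing. You need this observation to even know $\Phi$ is well defined before asking whether it glues. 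Second, and symmetrically, in part (b) a morphism with respect to $\alpha$ does \emph{not} force $\phi'_\sigma = \alpha\circ\phi_\sigma$ as homomorphisms $T\to G'$; the frame and $T$-equivariance conditions give only $f(t\cdot x_0) = \phi'_\sigma(t)\,\alpha(\phi_\sigma(t))^{-1}$ for a regular $f:U_\sigma\to G'$ with $f(x_0)=e$, hence $\phi'_\sigma\circ\lambda_v \sim \alpha\circ\phi_\sigma\circ\lambda_v$, i.e.\ $\Phi' = \hat\alpha\circ\Phi$ -- the stated conclusion is right, the intermediate equality is not. Third, the converse construction in (a) is where the real work lies and your phrase ``now run in reverse'' conceals a genuine gap: the existence of the limits $\lim_{s\to 0}\phi_\tau(\lambda_v(s))\phi_\sigma(\lambda_v(s))^{-1}$ for each $v\in\tau\cap N$ does not, on its face, produce a regular morphism $h_{\sigma,\tau}:U_{\sigma\cap\tau}\to G$; one must prove an extension lemma for $T$-quasi-equivariant maps from the dense torus into $G$ (or reduce to the matrix case via a faithful representation and control denominators), and verifying the cocycle condition is then easy by density but the regularity itself is not. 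Acknowledging and filling that extension step would make the sketch a complete proof.
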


The idea of proof of Theorem \ref{th-KM} as follows: Let $\Phi:\abs{\Sigma}\to\tilde{\mathfrak{B}}(G)$ be a piecewise linear map. For each cone $\sigma \in \Sigma$, the integral linear map $\Phi_{|\sigma}$ gives an algebraic group homomorphism $T_\sigma \to H_\sigma$ where $T_\sigma$ is the stabilizer of the orbit $O_\sigma$. Extend this to a homomorphism $\phi_\sigma:T \to H_\sigma$. On each affine chart $U_\sigma$, consider the trivial toric principal bundle $\mathcal{P}_\sigma = U_\sigma\times G$ where $T$ acts on $G$ via $\phi_\sigma$. For two cones
$\sigma$, $\sigma' \in \Sigma$ with $\tau=\sigma\cap\sigma'$, define the transition function $\psi_{\sigma, \sigma'}: U_\tau = U_\sigma \cap U_{\sigma'} \to G$ by defining it on the open orbit by $\psi_{\sigma, \sigma'} (t\cdot x_0)=\phi_{\sigma'}(t)\phi_\sigma(t)^{-1}$. One shows that this extends to a regular function $\psi_{\sigma, \sigma'}: U_\tau \to G$. The toric principal bundle $\mathcal{P}$, associated to $\Phi$, is obtained by gluing the $\mathcal{P}_{\sigma}$ via the transition functions $\phi_{\sigma, \sigma'}$. 

It is shown in \cite[Theorem 4.1]{Dey} that if $G$ is reductive then any toric principal $G$-bundle is locally equivariantly trivial. Thus Theorem \ref{th-KM} immediately implies the following.

\begin{corollary} \label{cor-KM} Let $G$ be a reductive algebraic group over $\mathbf{k}$. 
\begin{itemize}
\item[(a)] There is a one-to-one correspondence between the isomorphism classes of framed toric principal $G$-bundles $\mathcal{P}$ over $X_\Sigma$ and the integral piecewise linear maps $\Phi:\abs{\Sigma}\to\tilde{\mathfrak{B}}(G)$.
    
\item[(b)] Moreover, let $\alpha:G\to G'$ be a homomorphism of reductive algebraic groups. Let $\mathcal{P}$ (respectively $\mathcal{P}'$) be a framed toric principal $G$-bundle (respectively $G'$-bundle) with corresponding 
    piecewise linear map $\Phi:\abs{\Sigma}\to\tilde{\mathfrak{B}}(G)$ (respectively $\Phi':\abs{\Sigma}\to\tilde{\mathfrak{B}}(G')$). Then there is a morphism of framed toric principal bundles $F:\mathcal{P}\to\mathcal{P}'$ with respect to $\alpha$ if and only if $\Phi' = \hat{\alpha}\circ\Phi$.
\end{itemize}
\end{corollary}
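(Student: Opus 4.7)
The plan is to derive Corollary \ref{cor-KM} as a direct consequence of Theorem \ref{th-KM} together with the local equivariant triviality result of Dey (\cite[Theorem 4.1]{Dey}). In other words, the entire content of the corollary is the observation that for reductive $G$ the hypothesis of local equivariant triviality in Theorem \ref{th-KM} is automatically satisfied, so one may drop it from the statement.

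For part (a), I would first invoke \cite[Theorem 4.1]{Dey} to conclude that when $G$ is reductive, every toric principal $G$-bundle $\mathcal{P}$ over $X_\Sigma$ is locally equivariantly trivial, i.e.\ $\mathcal{P}\mid_{U_\sigma}$ is equivariantly trivial for every $\sigma \in \Sigma$. Consequently, the set of isomorphism classes of framed toric principal $G$-bundles over $X_\Sigma$ coincides with the set of isomorphism classes of \emph{locally equivariantly trivial} framed toric principal $G$-bundles over $X_\Sigma$. Theorem \ref{th-KM}(a) then provides the desired one-to-one correspondence with integral piecewise linear maps $\Phi : |\Sigma| \to \tilde{\B}(G)$.

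For part (b), the same reasoning applies to both $\mathcal{P}$ and $\mathcal{P}'$: since $G$ and $G'$ are reductive, $\mathcal{P}$ and $\mathcal{P}'$ are automatically locally equivariantly trivial. Therefore Theorem \ref{th-KM}(b) applies to the pair $(\mathcal{P}, \mathcal{P}')$, yielding a morphism $F : \mathcal{P} \to \mathcal{P}'$ of framed toric principal bundles with respect to $\alpha$ if and only if $\Phi' = \hat{\alpha} \circ \Phi$. Formally, nothing new needs to be proved here.

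There is no genuine obstacle in this argument; the substantive input is the Dey local triviality theorem, which is quoted as a black box. The only care required is to note that the classification of \cite{Kaveh-Manon} is phrased for locally equivariantly trivial bundles, so the corollary is exactly the strengthening made possible by \cite[Theorem 4.1]{Dey} in the reductive setting.
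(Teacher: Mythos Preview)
Your proposal is correct and matches the paper's approach exactly: the paper simply states that by \cite[Theorem 4.1]{Dey} every toric principal $G$-bundle for reductive $G$ is locally equivariantly trivial, so Theorem \ref{th-KM} immediately yields the corollary.
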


\begin{remark}[Toric principal bundles over $\C$]   \label{rem-equiv-trivial-C}
Using analytic methods, it is also shown in \cite{Biswas} that when the base field $\k = \C$, the local equivariant triviality of toric principal bundles holds for any linear algebraic group. Hence Corollary \ref{cor-KM} also holds for linear algebraic groups over $\C$.
\end{remark}

%From the proof of \eqref{eq:6}, we have the following lemma
The following is a simple corollary of Theorem \ref{th-KM}(b).
\begin{lemma}   \label{lem-KM}
    Let $(\mathcal{P},p_0)$ be a locally equivariantly trivial framed toric principal $G$-bundle with corresponding integral piecewise linear map $\Phi:\abs{\Sigma}\to\tilde{\mathfrak{B}}(G)$. Then for any $g_0\in G$, the corresponding
integral piecewise linear map for the framed toric principal $G$-bundle $(\mathcal{P},p_0\cdot g_0)$ is $\hat{\alpha}_{g_0}\circ \Phi$, where $\alpha_{g_0}: G \to G$ is the conjugation homomorphim $x \mapsto g_0^{-1} x g_0$.
\end{lemma}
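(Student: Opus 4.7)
The plan is to deduce the lemma directly from Theorem \ref{th-KM}(b) by exhibiting an explicit morphism of framed toric principal bundles $(\mathcal{P},p_0) \to (\mathcal{P}, p_0\cdot g_0)$ with respect to the conjugation homomorphism $\alpha_{g_0}: G \to G$, $x \mapsto g_0^{-1} x g_0$.

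The natural candidate is right translation by $g_0$. More precisely, I define $F: \mathcal{P} \to \mathcal{P}$ by $F(z) = z \cdot g_0$ for every $z \in \mathcal{P}$. First I would verify the three properties of a morphism of framed toric principal bundles. The $G$-equivariance condition with respect to $\alpha_{g_0}$ reads
\begin{equation*}
F(z \cdot h) = z h g_0 = (z g_0)(g_0^{-1} h g_0) = F(z)\cdot \alpha_{g_0}(h),
\end{equation*}
which holds for all $h \in G$. The $T$-equivariance of $F$ is immediate from the fact that, in the definition of a toric principal bundle, the $T$-action and $G$-action commute: $F(t\cdot z) = (t\cdot z)\cdot g_0 = t\cdot (z\cdot g_0) = t\cdot F(z)$. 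Finally, $F(p_0) = p_0\cdot g_0$, so $F$ is indeed a morphism of framed toric principal bundles from $(\mathcal{P}, p_0)$ to $(\mathcal{P}, p_0\cdot g_0)$ with respect to $\alpha_{g_0}$.

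With the morphism in hand, I would apply Theorem \ref{th-KM}(b) to the setup $G' = G$, $\alpha = \alpha_{g_0}$, $(\mathcal{P}', p_0') = (\mathcal{P}, p_0\cdot g_0)$. If $\Phi': |\Sigma|\to \tilde{\B}(G)$ denotes the integral piecewise linear map corresponding to $(\mathcal{P}, p_0\cdot g_0)$, then the existence of such a morphism yields $\Phi' = \hat{\alpha}_{g_0} \circ \Phi$, which is exactly the claim.

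There is no real obstacle here beyond guessing the correct map; the computation $F(z)=z\cdot g_0$ (rather than, say, $z\cdot g_0^{-1}$) is forced by requiring the twist on the right-action to come out as conjugation by $g_0^{-1}$, and the fact that we need $F$ to send $p_0$ to $p_0\cdot g_0$ rather than the other direction is similarly dictated by the direction of the homomorphism $\alpha_{g_0}$. Everything else is a direct application of Theorem \ref{th-KM}(b).
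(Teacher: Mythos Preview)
Your proof is correct and follows exactly the same approach as the paper: use right translation by $g_0$ as a morphism of framed toric principal bundles with respect to $\alpha_{g_0}$, then invoke Theorem \ref{th-KM}(b). You have simply spelled out in detail the verifications that the paper leaves implicit.
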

\begin{proof}
The right action by $g_0$ gives a morphism of framed toric principal bundles from $(\mathcal{P}, p_0)$ to $(\mathcal{P}, p_0 \cdot g_0)$ with respect to the conjugation homomorphism $\alpha_{g_0}: G \to G$. Theorem \ref{th-KM}(b) then implies that the piecewise linear map of $(\mathcal{P}, p_0 \cdot g_0)$ is $\hat{\alpha}_{g_0}\circ \Phi$.
\end{proof}

\section{Equivariant automorphism group}
In this section we use the classification of framed toric principal bundles (Theorem \ref{th-KM}) to give a short proof of a result of Dasgupta et al (\cite[Proposition 5.1]{Dasgupta}) describing the equivariant automorphism group of a toric principal bundle. 
\begin{definition}
%    Let $X_\Sigma$ be the toric variety associated to a fan $\Sigma$ and $G$ an algebraic group. 
Let $\mathcal{P}$ be a toric principal $G$-bundle  over a toric variety $X_\Sigma$. A \emph{$T$-equivariant automorphism} $F$ on $\mathcal{P}$ is a morphism of
principal $G$-bundles $F:\mathcal{P}\to\mathcal{P}$ which is $T$-equivariant. In other words, in the sense of Definition \ref{def-morphism-tpb}, $F$ is a morphism of toric principal bundles with respect to the identity homomorphism $\textup{id}: G \to G$. We let $\Aut_T(\mathcal{P})$ denote the group of $T$-equivariant automorphisms of $\mathcal{P}$.

%such that
%    \begin{equation}
%        F(z\cdot g)=F(z)\cdot g,\ \forall z \in \mathcal{P}, \forall g\in G.
%    \end{equation}
\end{definition}

\begin{theorem} \label{th-auto-gp}
%    Let $X_\Sigma$ be the toric variety associated to a fan $\Sigma$ and $G$ an algebraic group. 
Let $\mathcal{P}$ be a locally equivariant trivial toric principal $G$-bundle  over a toric variety $X_\Sigma$. Pick a frame $p_0 \in \mathcal{P}_{x_0}$ and let $\Phi:|\Sigma| \to \tilde{\B}(G)$ be the piecewise linear map associated to $(\mathcal{P}_{x_0}, p_0)$. We have:
    \begin{equation*}
        \Aut_T(\mathcal{P}) \cong \bigcap_{\rho\in\Sigma(1)}P_\rho,
    \end{equation*}
where $P_\rho$ is the parabolic subgroup in $G$ corresponding to $\Phi(v_\rho) \in \tilde{\B}_\Z(G)$ (see \eqref{equ-P-lambda}). %Here $\Phi$ is the corresponding integral piecewise linear map to any framed bundle $(\mathcal{P},p_0)$.
\end{theorem}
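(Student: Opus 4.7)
The plan is to leverage the classification in Theorem \ref{th-KM}(b) to convert the question of computing $\Aut_T(\mathcal{P})$ into a combinatorial question about piecewise linear maps to $\tilde{\B}(G)$.

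First, I would construct an injective group homomorphism $\Psi: \Aut_T(\mathcal{P}) \hookrightarrow G$, $F \mapsto g_F$, where $g_F$ is the unique element of $G$ with $F(p_0) = p_0 \cdot g_F$. The homomorphism property follows from $G$-equivariance: $(F_1 \circ F_2)(p_0) = F_1(p_0) \cdot g_{F_2} = p_0 \cdot g_{F_1} g_{F_2}$. Injectivity follows because $F$ is determined on the preimage in $\mathcal{P}$ of the open $T$-orbit of $X_\Sigma$ by combining $G$- and $T$-equivariance with its value at $p_0$, and this preimage is open and dense in $\mathcal{P}$.

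Next, I would identify the image of $\Psi$. Any $F \in \Aut_T(\mathcal{P})$ with $F(p_0) = p_0 \cdot g_0$ is tautologically a morphism of framed toric principal bundles $(\mathcal{P}, p_0) \to (\mathcal{P}, p_0 \cdot g_0)$ with respect to $\textup{id}: G \to G$. By Lemma \ref{lem-KM} the piecewise linear map of $(\mathcal{P}, p_0 \cdot g_0)$ is $\hat{\alpha}_{g_0} \circ \Phi$, so Theorem \ref{th-KM}(b) yields
\[
\textup{image}(\Psi) \;=\; \{\,g_0 \in G \mid \hat{\alpha}_{g_0} \circ \Phi = \Phi\,\}.
\]
It therefore remains to show this set equals $\bigcap_{\rho \in \Sigma(1)} P_\rho$. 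Restricting $\hat{\alpha}_{g_0} \circ \Phi = \Phi$ to each primitive ray vector $v_\rho$ and using Proposition \ref{prop-tilde-B-one-para} together with the characterization \eqref{equ-P-lambda} of $P_\lambda$, the equality $\hat{\alpha}_{g_0}(\Phi(v_\rho)) = \Phi(v_\rho)$ in $\tilde{\B}(G)$ is equivalent to $g_0 \in P_\rho$, which gives the forward inclusion.

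For the reverse inclusion, assume $g_0 \in \bigcap_\rho P_\rho$. Given any $v \in |\Sigma|$, pick a cone $\sigma \in \Sigma$ with rays $\rho_1, \ldots, \rho_k$ containing $v$ and write $v = \sum_i t_i v_{\rho_i}$ with $t_i \geq 0$; then $\Phi(v) = \sum_i t_i \Phi(v_{\rho_i})$ in the apartment $\tilde{A}_\sigma$. The key ingredient, which I view as the main obstacle, is the inclusion
\[
P_{\sum_i t_i \lambda_i} \;\supseteq\; \bigcap_i P_{\lambda_i}
\]
for one-parameter subgroups $\lambda_1, \ldots, \lambda_k$ of a common maximal torus $H$ and scalars $t_i \geq 0$. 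I would prove this by choosing a faithful representation $G \hookrightarrow \GL(V)$, decomposing $V$ into weight spaces of $H$, and noting that $g \in P_\lambda$ translates to $\langle \chi' - \chi, \lambda \rangle \geq 0$ for every pair of weights $(\chi, \chi')$ appearing in a non-zero block of $g$; this condition is visibly preserved under non-negative linear combinations. Applying the inclusion with $\lambda_i = \Phi(v_{\rho_i})$ yields $g_0 \in P_{\Phi(v)}$, hence $\hat{\alpha}_{g_0}(\Phi(v)) = \Phi(v)$ by \eqref{equ-P-lambda}, completing the argument.
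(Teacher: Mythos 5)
Your proof follows the paper's route almost exactly: the injective homomorphism $F \mapsto g_0$ where $F(p_0) = p_0 \cdot g_0$, the identification of its image as $\{g_0 \in G \mid \hat\alpha_{g_0}\circ\Phi=\Phi\}$ via Lemma \ref{lem-KM} and Theorem \ref{th-KM}(b), and the translation into $\bigcap_{\rho}P_\rho$ using \eqref{equ-P-lambda}. The only place you add content is the reverse inclusion: the paper justifies the reduction from all of $|\Sigma|$ to the primitive ray vectors with a one-line appeal to piecewise linearity of $\Phi$, whereas you correctly isolate and prove the needed lemma $P_{\sum_i t_i \lambda_i} \supseteq \bigcap_i P_{\lambda_i}$ (for one-parameter subgroups $\lambda_i$ of a common maximal torus and $t_i \ge 0$) via weight-space decomposition in a faithful representation, thereby filling a gap the paper leaves implicit. (Minor: with the block convention $g_{\chi,\chi'}\colon V_{\chi'}\to V_\chi$, the condition for $g\in P_\lambda$ should read $\langle \chi - \chi', \lambda\rangle \ge 0$ rather than $\langle \chi' - \chi, \lambda\rangle \ge 0$; this does not affect the argument, which only uses linearity of the condition in $\lambda$.)
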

\begin{proof}
Let $F\in\Aut_T(\mathcal{P})$ and let $F(p_0)=p'_0$.
Let $\Phi$, $\Phi'$ be the piecewise linear maps corresponding to the framed bundles $(\mathcal{P},p_0),(\mathcal{P},p'_0)$ respectively. There exists a $g_0\in G$ such that $p_0'=p_0\cdot g_0$.
By Lemma \ref{lem-KM} we have $\Phi' = \hat{\alpha}_{g_0} \circ \Phi$ where $\alpha_{g_0}: G \to G$ is the conjugation by $g_0$. 
It is straightforward to check that $F \mapsto g_0$ gives an injective homomorphism $\eta: \Aut_T(\mathcal{P}) \to G$. It is injective because, firstly $F$ is determined by its values on the open orbit. Moreover, by $T$ and $G$-equivariance, $F$ is determined on the open orbit by its value at the single point $p_0$.
We need to show that the image coincides with $\bigcap_{\rho \in \Sigma(1)} P_\rho$. Note that Theorem \ref{th-KM}(b) implies that $\Phi' = \Phi$ because the automorphism $F$ is equivariant with respect to the identity $\textup{id}: G \to G$. It follows that $g_0$ is in the image of $\eta$ if and only of $\hat{\alpha}_{g_0}\circ \Phi = \Phi$. This means that, for any lattice point $x \in |\Sigma| \cap N$ we have $g_0^{-1} \Phi(x) g_0 \sim \Phi(x)$. In view of piecewise linearity of $\Phi$ this is equivalent to: $$g_0^{-1} \Phi(v_\rho) g_0 \sim \Phi(v_\rho), \quad \forall \rho \in \Sigma(1),$$
where $v_\rho$ is the shortest non-zero integral vector on $\rho$.
In view of \eqref{equ-P-lambda}, this is the case if and only of $g_0 \in \bigcap_{\rho \in \Sigma(1)} P_\rho$.
\end{proof}

\section{Equivariant reduction of structure group}
In this section we address the question of reduction of structure group for toric principal bundles.

\begin{definition}[Equivariant reduction of structure group] \label{def-eq-str-gp}
    Let $K$ be a closed subgroup of a linear algebraic group $G$. We say that a toric principal $G$-bundle $\mathcal{P}$ over $X_\Sigma$ has an 
    \emph{equivariant reduction of structure group to $K$} if there exsits a toric principal $K$-bundle $\mathcal{P}'$ over $X_\Sigma$ such that
    there is an isomorphism of toric principal $G$-bundles between $\mathcal{P}$ and $\mathcal{P}'\times^K G$, where $\mathcal{P}'\times^K G$ is the quotient of $\mathcal{P}' \times G$ by the right action of $K$ given by: 
$$(p, g) \cdot k = (pk, k^{-1}g), \quad \forall p \in \mathcal{P},~ \forall k \in K,~ \forall g \in G.$$ The group $G$ acts on $\mathcal{P}'\times^K G$ by right multiplication on the second component and with this action $\mathcal{P}'\times^K G$ is a principal $G$-bundle.
    If $\mathcal{P}$ admits an equivariant reduction of structure group to a maximal torus in $G$, then we say $\mathcal{P}$ \emph{splits equivariantly}.
\end{definition}

\begin{remark}   \label{rem-P-P'}
    Let $\iota: K \hookrightarrow G$ be the inclusion map and $F:\mathcal{P}'\to \mathcal{P}'\times^K G$ be defined by $F(p')=(p',1)$, where $1$ is the identity element in $G$. It is not difficult to see that $F$ is a morphism of principal bundles with
    respect to the homomorphism $\iota$ since
    \begin{equation*}
        F(p'\cdot k)=(p'\cdot k,1)=(p'\cdot k k^{-1},k\cdot 1)=(p', k)=(p',1)\cdot k=F(p')\cdot\iota(k).
    \end{equation*}
\end{remark}

\begin{remark}
    A toric principal $G$-bundle $\mathcal{P}$ over $X_\Sigma$ has an equivariant reduction of structure group to $K$ just means
    $\mathcal{P}$ has equivariant trivializations whose transition functions all lie in $K$.
\end{remark}

The inclusion map $\iota: K \hookrightarrow G$, gives an embedding $\hat{\iota}:\tilde{\mathfrak{B}}(K)\hookrightarrow\tilde{\mathfrak{B}}(G)$ (see Definition \ref{def-alpha-hat}). For any extended apartment $\tilde{A}_H\subset \tilde{\mathfrak{B}}(G)$,
the preimage of $\tilde{A}_H$ lies in an extended apartment in $\tilde{\mathfrak{B}}(K)$.

\begin{theorem}[Criterion for equivariant reduction of structure group]  \label{th-cri-red-st-gp}
A locally equivariantly trivial toric principal $G$-bundle $\mathcal{P}$ over $X_\Sigma$ has an equivariant reduction of structure group to $K$ if and only if there exists a $p_0\in\mathcal{P}_{x_0}$ such that the image of $\Phi$
    lies in $\tilde{\mathfrak{B}}(K)$. Here $\Phi:\abs{\Sigma}\to\tilde{\mathfrak{B}}(G)$ is the integral piecewise linear map corresponding to the framed bundle $(\mathcal{P},p_0)$.
\end{theorem}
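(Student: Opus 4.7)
The plan is to translate both directions of the biconditional into statements about piecewise linear maps via Theorem \ref{th-KM}. The bridge between the two sides is the inclusion $\mathcal{P}' \hookrightarrow \mathcal{P}' \times^K G$ of Remark \ref{rem-P-P'}, which is a morphism of principal bundles with respect to $\iota: K \hookrightarrow G$. Combined with Theorem \ref{th-KM}(b), this will convert the geometric statement of reduction of structure group into the formal equality $\Phi = \hat{\iota} \circ \Phi'$.

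For the direction ($\Rightarrow$), I would suppose $\mathcal{P} \cong \mathcal{P}' \times^K G$ for some toric principal $K$-bundle $\mathcal{P}'$ and first verify that $\mathcal{P}'$ inherits local equivariant triviality from $\mathcal{P}$. Then Theorem \ref{th-KM}(a) applied to $K$ yields, from any chosen frame $p_0' \in \mathcal{P}'_{x_0}$, an integral piecewise linear map $\Phi': |\Sigma| \to \tilde{\B}(K)$. Setting $p_0 = (p_0', 1)$ as the induced frame of $\mathcal{P}$, Remark \ref{rem-P-P'} says that $p' \mapsto (p', 1)$ is a morphism of framed toric principal bundles with respect to $\iota$, so by Theorem \ref{th-KM}(b) the piecewise linear map of $(\mathcal{P}, p_0)$ is $\hat{\iota} \circ \Phi'$, whose image visibly lies in $\hat{\iota}(\tilde{\B}(K)) \subset \tilde{\B}(G)$.

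For the converse ($\Leftarrow$), I would start with a frame $p_0$ whose associated $\Phi$ has image in $\tilde{\B}(K)$. Since $\hat{\iota}$ is injective, $\Phi$ factors uniquely as $\hat{\iota} \circ \Phi'$ for some $\Phi': |\Sigma| \to \tilde{\B}(K)$; integrality and per-cone $\R$-linearity of $\Phi'$ transfer from $\Phi$, and one verifies the apartment condition to conclude $\Phi'$ is an integral piecewise linear map in the sense of the definition. Applying Theorem \ref{th-KM}(a) to $K$ then produces a locally equivariantly trivial framed toric principal $K$-bundle $(\mathcal{P}', p_0')$ realizing $\Phi'$. The framed bundle $(\mathcal{P}' \times^K G, (p_0', 1))$ is then locally equivariantly trivial, and, by Remark \ref{rem-P-P'} together with Theorem \ref{th-KM}(b) applied to the inclusion $\mathcal{P}' \hookrightarrow \mathcal{P}' \times^K G$, its piecewise linear map is $\hat{\iota} \circ \Phi' = \Phi$. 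The uniqueness clause in Theorem \ref{th-KM}(a) forces $\mathcal{P} \cong \mathcal{P}' \times^K G$ as toric principal $G$-bundles, giving the desired equivariant reduction of structure group to $K$.

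The main technical point I anticipate is verifying that $\Phi'$ is a bona fide piecewise linear map into $\tilde{\B}(K)$, i.e., that for each cone $\sigma \in \Sigma$ the image $\Phi'(\sigma)$ lies in a single extended apartment of $K$. Given that $\Phi(\sigma)$ spans a linear subspace of the single extended apartment $\tilde{A}_\sigma = \Lambda^\vee_\R(H_\sigma)$ of $G$ and is entirely contained in $\hat{\iota}(\tilde{\B}(K))$, one needs to produce a single maximal torus $H'_\sigma$ of $K$ whose extended apartment $\Lambda^\vee_\R(H'_\sigma)$ maps onto $\Phi(\sigma)$ under $\hat{\iota}$. This is the group-theoretic content of the theorem, where the structure of maximal tori and parabolic subgroups of $K$ relative to those of $G$ must be used; everything else (integrality, linearity, and compatibility with framings) is formal bookkeeping through Theorem \ref{th-KM}.
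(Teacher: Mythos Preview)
Your proposal is correct and follows essentially the same route as the paper's proof: both directions are handled by passing through the morphism $\mathcal{P}' \hookrightarrow \mathcal{P}' \times^K G$ of Remark \ref{rem-P-P'} and invoking Theorem \ref{th-KM}(b) to obtain $\Phi = \hat{\iota}\circ\Phi'$. The technical point you flag about the apartment condition for $\Phi'$ is exactly what the paper disposes of in the sentence immediately preceding the theorem (``the preimage of $\tilde{A}_H$ lies in an extended apartment in $\tilde{\mathfrak{B}}(K)$''), and your extra care about local equivariant triviality of $\mathcal{P}'$ in the forward direction is a hypothesis the paper silently assumes.
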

\begin{proof}
Suppose $\mathcal{P}$ has an equivariant reduction of structure group to $K$. Then there exists a toric principal $K$-bundle $\mathcal{P}'$ over $X_\Sigma$ such that
$\mathcal{P} \simeq \mathcal{P}'\times^K G$ as toric $G$-principal bundles. 
Let $\Phi':\abs{\Sigma}\to\tilde{\mathfrak{B}}(K)$ be the corresponding integral linear map of $(\mathcal{P}', p_0')$ for some $p_0'\in\mathcal{P}'_{x_0}$.
Then $\hat{\iota}\circ\Phi':\abs{\Sigma}\to\tilde{\mathfrak{B}}(G)$ is an integral piecewise linear map as well. From Theorem \ref{th-KM}(b), we know $\hat{\iota}\circ \Phi'$ is the integral piecewise linear map
 corresponding to $(\mathcal{P}'\times^K G, (p_0', 1))$, i.e. there exists a $(p_0', 1)\in\mathcal{P}_{x_0}$ such that the image of $\hat{\iota}\circ\Phi'$ lies in $\tilde{\mathfrak{B}}(K)$. Conversely, suppose there exists a $p_0\in\mathcal{P}_{x_0}$ such that the image of $\Phi$, the integral piecewise linear map corresponding to $(\mathcal{P},p_0)$, lies in $\tilde{\mathfrak{B}}(K)$, where $\Phi:\abs{\Sigma}\to\tilde{\mathfrak{B}}(G)$.
Since the image of $\Phi$
lies in $\tilde{\mathfrak{B}}(K)$, we have a piecewise linear map $\Phi': |\Sigma| \to \tilde{\B}(K)$ such that $\Phi = \hat{\iota} \circ \Phi'$. Let $\mathcal{P}'$ be the framed toric principal bundle corresponding to $\Phi'$. As above, by Remark \ref{rem-P-P'} and Theorem \ref{th-KM}(b),  $(\mathcal{P}' \times^K G, (p_0', 1))$ is the framed toric principal $G$-bundle corresponding to $\hat{\iota} \circ \Phi'$. Therefore, $\mathcal{P} \cong \mathcal{P}' \times^K G$ as toric $G$-principal bundles.
\end{proof}

\begin{corollary}[Criterion for equivariant splitting]\label{cor-eq-split}
A locally equivariantly trivial toric principal $G$-bundle $\mathcal{P}$ over $X_\Sigma$ splits equivariantly if and only if for some (and hence any) $p_0\in\mathcal{P}_{x_0}$ the image of $\Phi$ lies in an extended apartment $\tilde{A}_H$ for some maximal torus $H\subset G$. Here $\Phi:\abs{\Sigma}\to\tilde{\B}(G)$ is the integral piecewise linear map corresponding to the framed bundle $(\mathcal{P}, p_0)$.
\end{corollary}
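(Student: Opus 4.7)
The strategy is simply to specialize Theorem \ref{th-cri-red-st-gp} to the case when $K = H$ is a maximal torus and then unpack what $\tilde{\B}(H)$ looks like for a torus. By definition, $\mathcal{P}$ splits equivariantly iff it admits an equivariant reduction of structure group to some maximal torus $H \subset G$. Applying Theorem \ref{th-cri-red-st-gp} with $K = H$, this happens iff there is a frame $p_0$ so that the image of the associated piecewise linear map $\Phi : |\Sigma| \to \tilde{\B}(G)$ is contained in $\hat{\iota}(\tilde{\B}(H)) \subset \tilde{\B}(G)$, where $\iota : H \hookrightarrow G$ is the inclusion.

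The next step is to identify $\hat{\iota}(\tilde{\B}(H))$ with an extended apartment of $G$. Since $H$ is itself a torus, its unique maximal torus is $H$ and it has a trivial Weyl group, so by Definition \ref{def-tilde-B(G)} (or more precisely the extension to arbitrary linear algebraic groups following it) we have $\tilde{\B}(H) = \Lambda^\vee_\R(H)$, a single vector space with no gluing. Under $\hat{\iota}$ this embeds as the extended apartment $\tilde{A}_H = \Lambda^\vee_\R(H) \subset \tilde{\B}(G)$, as is clear from Definition \ref{def-alpha-hat} (any one-parameter subgroup of $H$ is already a one-parameter subgroup of $G$ in $\Lambda^\vee(H)$). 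This gives the biconditional: $\mathcal{P}$ splits equivariantly iff for some $p_0$ the image of $\Phi$ lies in $\tilde{A}_H$ for some maximal torus $H \subset G$.

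Finally, I address the parenthetical \emph{``some (and hence any)''} assertion. Suppose the claim holds for the frame $p_0$, and let $p_0' \in \mathcal{P}_{x_0}$ be any other frame with associated piecewise linear map $\Phi'$. Write $p_0' = p_0 \cdot g_0$ for some $g_0 \in G$. By Lemma \ref{lem-KM}, $\Phi' = \hat{\alpha}_{g_0} \circ \Phi$, where $\alpha_{g_0}$ is conjugation by $g_0$. Since conjugation carries maximal tori to maximal tori, Definition \ref{def-alpha-hat} gives $\hat{\alpha}_{g_0}(\tilde{A}_H) = \tilde{A}_{g_0^{-1} H g_0}$. Thus if the image of $\Phi$ lies in $\tilde{A}_H$, then the image of $\Phi'$ lies in the extended apartment $\tilde{A}_{g_0^{-1} H g_0}$ of another maximal torus, establishing the claim for $p_0'$.

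The only genuine content beyond invoking Theorem \ref{th-cri-red-st-gp} is the identification $\tilde{\B}(H) = \tilde{A}_H$ for $H$ a torus, which is the main (but very mild) obstacle; everything else is a direct bookkeeping exercise using Lemma \ref{lem-KM} and the functoriality of $\hat{(\cdot)}$.
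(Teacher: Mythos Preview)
Your proof is correct and follows exactly the same approach as the paper: invoke Theorem \ref{th-cri-red-st-gp} with $K=H$ a maximal torus and observe that $\tilde{\B}(H)=\tilde{A}_H$. You additionally spell out the ``some (and hence any)'' claim via Lemma \ref{lem-KM}, which the paper leaves implicit.
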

\begin{proof}
By definition, $\tilde{\B}(H)$ is the extended apartment $\tilde{A}_H$. The claim follows from this and Theorem \ref{th-cri-red-st-gp}.
\end{proof}

%\begin{remark}\label{eq:12}
%    If such a $p_0\in\mathcal{P}_{x_0}$ exists and let $\Phi'$ be the integral piecewise linear map corresponding to $(\mathcal{P}, p_0')$ for any other $p_0'\in\mathcal{P}_{x_0}$, then there exists a $g\in G$ such that $p_0' = p_0\cdot g$. From \eqref{lem-KM}, $\Phi' = g^{-1}\Phi g$. Therefore, the image of $\Phi'$ also lies in a
%    cone over an apartment $\tilde{A}_{g^{-1}Hg}$.
%\end{remark}

Theorem \ref{th-cri-red-st-gp} readily implies the following result of Dasgupta et al (\cite[Theorem 6.9]{Dasgupta}). 
\begin{corollary}   \label{cor-split-K-G}
Let $K$ be a closed subgroup of a linear algebraic group $G$. Let $\mathcal{P}'$ be a locally equivariantly trivial toric principal $K$-bundle over $X_\Sigma$.
    If $\mathcal{P}=\mathcal{P}'\times^K G$ splits equivariantly (as a $G$-bundle), then $\mathcal{P}'$ splits equivariantly (as a $K$-bundle).
\end{corollary}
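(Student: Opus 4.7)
The plan is to translate the splitting hypothesis via the piecewise linear classification into a constraint on $\Phi'$ forcing its image to lie in an extended apartment of $K$; Corollary \ref{cor-eq-split} applied to $\mathcal{P}'$ as a $K$-bundle then gives the desired splitting. Fix a frame $p_0' \in \mathcal{P}'_{x_0}$ and let $\Phi': |\Sigma| \to \tilde{\B}(K)$ be the associated piecewise linear map. By Remark \ref{rem-P-P'} and Theorem \ref{th-KM}(b), the piecewise linear map of the framed $G$-bundle $(\mathcal{P}, (p_0', 1))$ is $\hat{\iota} \circ \Phi'$.

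Since $\mathcal{P}$ splits $G$-equivariantly, Corollary \ref{cor-eq-split} yields a frame $p_0 = (p_0', 1) \cdot g_0$ of $\mathcal{P}$ whose piecewise linear map lies in some extended apartment $\tilde{A}_H$ of $G$. Computing this map via Lemma \ref{lem-KM} and rearranging gives
\[
\hat{\iota}(\Phi'(|\Sigma|)) \subset \hat{\alpha}_{g_0}^{-1}(\tilde{A}_H) = \tilde{A}_{\tilde{H}}, \qquad \tilde{H} := g_0 H g_0^{-1}.
\]
The remaining task is to locate a maximal torus $H' \subset K$ such that $\Phi'(|\Sigma|) \subset \tilde{A}_{H'}$. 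Set $S := (\tilde{H} \cap K)^\circ$; as a closed connected subgroup of the torus $\tilde{H}$, $S$ is itself a torus of $K$, contained in some maximal torus $H' \subset K$, so $\tilde{\B}(S) \subset \tilde{A}_{H'}$. It therefore suffices to establish the key claim $\Phi'(|\Sigma|) \subset \tilde{\B}(S)$.

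The main obstacle is this key claim. Pointwise it reduces to the following: if a one-parameter subgroup $\lambda: \mathbb{G}_m \to K$ satisfies $\iota \circ \lambda \sim \mu$ in $G$ for some $\mu: \mathbb{G}_m \to \tilde{H}$, then $\mu$ factors through $\tilde{H} \cap K$ and $\lambda$ is equivalent in $K$ to $\iota^{-1}(\mu) \in S$. I would prove this using the identity $P_\lambda^K = P_{\iota \circ \lambda}^G \cap K$, which follows from the closedness of $K$ in $G$ together with \eqref{equ-P-lambda}, combined with the standard fact that equivalent one-parameter subgroups of $G$ are conjugate by an element of the unipotent radical $R_u(P_{\iota \circ \lambda}^G)$. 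After modifying this conjugating element by a suitable element of the centralizer $Z_G(\mu)$, one arranges for it to lie inside $K$, which forces $\mu \in \tilde{H} \cap K$ and yields the required $K$-equivalence. Piecewise linearity of $\Phi'$ then promotes the pointwise statement to the global inclusion $\Phi'(|\Sigma|) \subset \tilde{\B}(S)$, completing the proof.
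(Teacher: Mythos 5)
Your proof has a genuine gap in the ``key claim.'' You assert that if a one-parameter subgroup $\lambda:\mathbb{G}_m\to K$ satisfies $\iota\circ\lambda\sim\mu$ in $G$ for some $\mu:\mathbb{G}_m\to\tilde{H}$, then $\mu$ factors through $\tilde{H}\cap K$ and $\lambda\sim_K\iota^{-1}(\mu)\in S$. This is false. Take $G=\SL(2)$, $\tilde{H}$ the diagonal torus, $u=\left(\begin{smallmatrix}1&1\\0&1\end{smallmatrix}\right)$, $K=u\tilde{H}u^{-1}$, and $\lambda(t)=u\,\mathrm{diag}(t,t^{-1})\,u^{-1}\in K$. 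Then $\lambda\sim_G\mu$ where $\mu(t)=\mathrm{diag}(t,t^{-1})$ (the limit $\lim_{s\to0}\lambda(s)\mu(s)^{-1}=u$ exists), but $\tilde{H}\cap K=\{\pm I\}$ and $S=(\tilde{H}\cap K)^\circ=\{I\}$, so $\mu$ does not factor through $\tilde{H}\cap K$ and $[\lambda]_K\notin\tilde{\B}(S)=\{0\}$. Because the claim is false, the conjugation-modification strategy you sketch (moving the $R_u(P^G_{\iota\circ\lambda})$-conjugator into $K$ via an element of $Z_G(\mu)$) cannot succeed.

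The deeper issue is that your intermediate target is too strong: you try to show $\Phi'(|\Sigma|)\subset\tilde{\B}(S)$, but in general $\tilde{\B}(K)\cap\tilde{A}_{\tilde{H}}$ is strictly larger than $\tilde{\B}(S)$, as the example shows. The paper only claims (and only needs) the weaker containment $\tilde{\B}(K)\cap\tilde{A}_{\tilde{H}}\subset\tilde{A}_{H'}$, where $H'$ is a maximal torus of $K$ containing $S$; in the $\SL(2)$ example $H'=K$ (itself a torus), so that containment holds even though $\tilde{\B}(S)$ is a single point. Your argument should therefore aim at $\tilde{A}_{H'}$ rather than $\tilde{\B}(S)$, which requires a different justification than the one you give. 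As a minor point, since Corollary \ref{cor-eq-split} already says the splitting criterion holds for \emph{any} choice of frame, the detour through $g_0$ and Lemma \ref{lem-KM} is unnecessary: the piecewise linear map of $(\mathcal{P},(p_0',1))$ itself has image in some $\tilde{A}_H$, which is how the paper proceeds.
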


\begin{proof}
%Let $\iota:K\to G$ be the identity map. Then $\Phi=\hat{\iota}\circ\Phi'$, where $\Phi':\abs{\Sigma}\to\tilde{\mathfrak{B}}(K)$ is the integral piecewise linear map corresponding to $(\mathcal{P}',p_0')$. Then the image of $\Phi'$ lies in a cone over an apartment in $\tilde{\mathfrak{B}}(K)$. From \eqref{eq:12}, $\mathcal{P}'$ splits equivariantly.
As before let $\iota: K \hookrightarrow G$ denote the inclusion map. We consider $\tilde{\B}(K)$ as a subset of $\tilde{\B}(G)$ via the embedding $\hat{\iota}: \tilde{\B}(K) \hookrightarrow \tilde{\B}(G)$. As explained above, for any frame $(p'_0,1) \in \mathcal{P}_{x_0}$, the image of the piecewise linear map $\Phi$ corresponding to $(\mathcal{P}' \times^K G, (p'_0, 1))$ lies in $\tilde{\B}(K)$. Since this bundle splits equivariantly, Corollary \ref{cor-eq-split} implies that this image moreover lies in $\tilde{\B}(H)$, for some maximal torus $H \subset G$. Now since the connected component of the identity in $H \cap G$ is a torus, it is contained in some maximal torus $H' \subset K$. This means that $\tilde{\B}(K) \cap \tilde{\B}(H) \subset \tilde{\B}(H')$ which, in light of Corollary \ref{cor-eq-split}, implies that $\mathcal{P}'$ also splits equivariantly. 
%From the proof of \eqref{eq:7}, there exists a $(p_0', 1_G)\in\mathcal{P}_{x_0}$ such that the image of $\Phi$ lies in $\tilde{\mathfrak{B}}(K)$, where $\Phi:\abs{\Sigma}\to\tilde{\mathfrak{B}}(G)$ is the integral piecewise linear map corresponding to $(\mathcal{P}, (p_0', 1_G))$.
%    From \eqref{eq:12}, the image of $\Phi$ lies in a cone over an apartment $\tilde{A}_H$ for some maximal torus $H\subset G$. Then the image of $\Phi$ lies in $\tilde{A}_H\cap\tilde{\mathfrak{B}}(K)$, which is also a cone over an apartment in $\tilde{\mathfrak{B}}(K)$. 
\end{proof}

In \cite[Theorem 6.1.2]{Klyachko} as well as \cite[Corollary 3.5]{Kaneyama88}, it is shown that any toric vector bundle of rank $r$ over $\mathbb{P}^n$ splits equivariantly, for $r < n$. In our language, any toric principal $\GL(r)$-bundle over $\mathbb{P}^n$ splits equivariantly, for $r < n$. As observed in \cite[Theorem 6.1]{Dasgupta}, this combined with Corollary \ref{cor-split-K-G} gives us the following.
 
\begin{corollary}\label{cor-Pn-split}
Let $K$ be a closed subgroup of $\GL(r)$. Any toric principal $K$-bundle on $\mathbb{P}^n$ splits equivariantly if $r < n$.
\end{corollary}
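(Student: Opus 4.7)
The plan is to reduce this to the known splitting result for toric vector bundles, using the fact that we already have Corollary \ref{cor-split-K-G} as a bridge between reductions of structure group.

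First, given a toric principal $K$-bundle $\mathcal{P}'$ over $\mathbb{P}^n$, I would form the associated toric principal $\GL(r)$-bundle $\mathcal{P} = \mathcal{P}' \times^K \GL(r)$ via the inclusion $\iota: K \hookrightarrow \GL(r)$. The $T$-action on $\mathcal{P}'$ induces a $T$-action on $\mathcal{P}$ commuting with the $\GL(r)$-action, so $\mathcal{P}$ is genuinely a toric principal $\GL(r)$-bundle over $\mathbb{P}^n$. Under the classical dictionary, $\mathcal{P}$ corresponds to a toric vector bundle of rank $r$ over $\mathbb{P}^n$.

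Next, I would invoke the Klyachko/Kaneyama splitting result cited immediately above the corollary: since $r < n$, this toric vector bundle (equivalently, the toric principal $\GL(r)$-bundle $\mathcal{P}$) splits equivariantly as a $\GL(r)$-bundle. Finally, applying Corollary \ref{cor-split-K-G} with the closed subgroup $K \subset \GL(r)$ and the bundle $\mathcal{P}'$ whose associated $\GL(r)$-bundle $\mathcal{P}' \times^K \GL(r)$ splits equivariantly, we conclude that $\mathcal{P}'$ itself splits equivariantly as a $K$-bundle.

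The only real subtlety to watch for is the local equivariant triviality hypothesis of Corollary \ref{cor-split-K-G}: one needs $\mathcal{P}'$ to be locally equivariantly trivial in order to even have a well-defined piecewise linear map $\Phi': |\Sigma| \to \tilde{\B}(K)$ with image to examine. If $K$ is reductive this is automatic by \cite[Theorem 4.1]{Dey}, and for general closed $K \subset \GL(r)$ over $\C$ this follows from the analytic argument recorded in Remark \ref{rem-equiv-trivial-C}. Apart from this bookkeeping, the argument is entirely formal once Corollary \ref{cor-split-K-G} and the classical Klyachko/Kaneyama theorem are in hand; there is no substantial obstacle.
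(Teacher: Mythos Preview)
Your proposal is correct and follows exactly the same route as the paper: form the associated $\GL(r)$-bundle $\mathcal{P}'\times^K \GL(r)$, invoke the Klyachko/Kaneyama splitting theorem since $r<n$, and then apply Corollary~\ref{cor-split-K-G}. Your added remark about the local equivariant triviality hypothesis is a worthwhile clarification that the paper's two-line proof leaves implicit.
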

\begin{proof}
Let $\mathcal{P}$ be a toric principal $K$-bundle on $\mathbb{P}^n$ where $r < n$. One knows that $\mathcal{P}\times^K \GL(r)$ splits equivariantly. Then by Corollary \ref{cor-split-K-G}, $\mathcal{P}$ also splits equivariantly.
\end{proof}

%\begin{prop}
%    Let $G$ be a reductive algebraic group over $\C$. Then $G$ contains a unique maximal normal unipotent subgroup called the unipotent radical of $G$ and denoted by $R_u(G)$.
%\end{prop}

%\begin{definition}
%{\color{red} Recall that a Levi subgroup of $G$ is the centralizer of some torus in $G$.
%\end{definition}
%Recall that for any parabolic subgroup $P$ in $G$, there exists a Levi subgroup such that $P=L\ltimes R_u(P)$.}

Finally, from Theorem \ref{th-cri-red-st-gp} we obtain a short proof of \cite[Proposition 6.4]{Dasgupta} about reduction of the structure group of a toric principal $P$-bundle, where $P$ is a parabolic subgroup, to its Levi subgroup. In fact, we give a slightly more general version of this result for any linear algebraic group that can be written as a semidirect product of a subgroup and a unipotent subgroup.
  
\begin{corollary}[Equivariant reduction of structure group to a Levi]
Let $P$ be a linear algebraic group that can be written as a semidirect product $P = L \ltimes U$ of subgroups $L$ and $U$ where $U$ is unipotent. Let $\mathcal{P}$ be a locally equivariantly trivial toric principal $P$-bundle. Then $\mathcal{P}$ has an equivariant reduction of structure group to $L$. This in particular applies to the Levi decomposition $P = L \ltimes R_u(P)$ of a parabolic subgroup $P$. 
%Let $P$ be a parabolic subgroup of a reductive algebraic group $G$ over $\C$. Let $\mathcal{P}$ be a toric principal $P$-bundle over $X_\Sigma$. Then $\mathcal{P}$ has an equivariant reduction of structure group to $L$, where $L$ is the Levi subgroup in the decomposition $P=L\ltimes R_u(P)$.
\end{corollary}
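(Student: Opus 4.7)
The plan is to combine Proposition \ref{prop-Levi} with the criterion of Theorem \ref{th-cri-red-st-gp}. Since $\mathcal{P}$ is locally equivariantly trivial, Theorem \ref{th-KM}(a) associates to any chosen frame $p_0 \in \mathcal{P}_{x_0}$ an integral piecewise linear map $\Phi: |\Sigma| \to \tilde{\B}(P)$. By Proposition \ref{prop-Levi}, the map $\hat{\iota}: \tilde{\B}(L) \to \tilde{\B}(P)$ induced by the inclusion $\iota: L \hookrightarrow P$ is a bijection, identifying the two extended Tits buildings. Consequently, the image of $\Phi$ lies in $\hat{\iota}(\tilde{\B}(L))$, i.e., in $\tilde{\B}(L)$ viewed as a subset of $\tilde{\B}(P)$.

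The remaining step is to invoke Theorem \ref{th-cri-red-st-gp} with $K = L$ to conclude that $\mathcal{P}$ has an equivariant reduction of structure group to $L$. To apply the theorem, one should first verify that the factored map $\Phi' := \hat{\iota}^{-1}\circ \Phi : |\Sigma| \to \tilde{\B}(L)$ is itself an integral piecewise linear map. This is routine: as noted in Definition \ref{def-alpha-hat}, $\hat{\iota}$ sends each extended apartment of $L$ to an extended apartment of $P$, and it restricts to an inclusion of $\R$-vector spaces on each such apartment. Since $\hat{\iota}$ is bijective by Proposition \ref{prop-Levi}, its inverse inherits the analogous property, and $\Phi'$ is piecewise linear and integral because $\Phi$ is. The final assertion about the Levi decomposition $P = L \ltimes R_u(P)$ of a parabolic subgroup is then immediate, since $R_u(P)$ is by definition unipotent.

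There is essentially no substantive obstacle here: the two key ingredients---the classification of toric principal bundles via piecewise linear maps (Theorem \ref{th-KM}) and the invariance of the extended Tits building under semidirect product with a unipotent group (Proposition \ref{prop-Levi})---already do the heavy lifting. The only minor point requiring care is the bookkeeping check that the $\hat{\iota}^{-1}$-pullback of a piecewise linear map is piecewise linear, which follows directly from the apartment-preserving nature of $\hat{\iota}$.
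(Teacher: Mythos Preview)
Your proposal is correct and follows essentially the same approach as the paper: invoke Proposition \ref{prop-Levi} to identify $\tilde{\B}(L)$ with $\tilde{\B}(P)$ via $\hat{\iota}$, then apply Theorem \ref{th-cri-red-st-gp}. The paper's proof is terser (two sentences), while you spell out the bookkeeping check that $\hat{\iota}^{-1}\circ\Phi$ remains integral piecewise linear, but the substance is identical.
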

\begin{proof}
From \ref{prop-Levi}, $\tilde{\mathfrak{B}}(P)\simeq\tilde{\mathfrak{B}}(L)$. By Theorem \ref{th-cri-red-st-gp}, $\mathcal{P}$ has an equivariant reduction of structure group to $L$.
\end{proof}

\begin{example}[Toric principal bundles over $\mathbb{P}^1$]
    Let $\mathcal{P}$ be a toric principal $G$-bundle over $X_\Sigma=\mathbb{P}^1$. The fan $\Sigma$ consists of two cones $\sigma_1=\left\langle 1 \right\rangle$ and $\sigma_2=\left\langle -1 \right\rangle$ in $1$-dimensional space. 
    For any $p_0\in\mathcal{P}_{x_0}$, the corresponding integral piecewise linear map $\Phi$ gives us two simplices $\Phi(\sigma_1)$ and $\Phi(\sigma_2)$. Since any two simplices lie in an apartment, there exists a maximal torus $H\subset G$
    such that $\Phi(\abs{\Sigma})\subset \tilde{A}_H$ and hence $\mathcal{P}$ splits equivariantly.
\end{example}

\begin{example}[Toric orthogonal principal bundle]
Let $\mathcal{P}$ be a toric principal $\SO(r)$-bundle. %Then $\mathcal{P}\times^{\SO(r)}\C^r$ is a toric principal $\GL(r)$-bundle and it has an equivariant reduction of structure group to $\SO(r)$.
From Corollary \ref{cor-Pn-split} it follows that any toric principal $\SO(r)$-bundle over $\mathbb{P}^n$ splits equivariantly when $r < n$. 
\end{example}

\section{Helly's number of a building}
In this section we introduce Helly's number of the Tits building of a linear algebraic group. More generally, we define Helly's number for an (abstract) building.

%{\color{red} Give the abstract definition of Helly's number for a collection of subsets of a set. State classical Helly's theorem for convex subsets in $\R^n$.}

The classical Helly's theorem in convex geometry asserts the following: let $S$ be a finite collection of convex subsets in $\R^n$ such that any $n+1$ of these convex subsets have non-empty intersection, then the intersection of all the convex sets in $S$ is non-empty.

Motivated by this theorem, one defines Helly's number for any collection of sets. Let $\mathcal{F}$ be a collection of sets. \emph{Helly's number} $h(\mathcal{F})$ of $\mathcal{F}$ is the minimal positive integer $h$ such that if a finite subcollection $S \subset \mathcal{F}$ satisfies
$\bigcap_{X \in S'}  X \neq \emptyset$ for all $S' \subset S$ with $\abs{S'}\le h$, then $\bigcap_{X \in S} X \neq\emptyset$. Helly's theorem about convex sets tells us that for the collection $\mathcal{F}$ of compact convex subsets of $\R^n$, we have $h(\mathcal{F}) \leq n+1$. In fact, it is not hard to see that $h(\mathcal{F}) = n+1$ (\cite{Helly}).

Motivated by \cite[Section 6]{Klyachko}, we give an analogous definition for the collection of parabolic subgroups of a linear algebraic group $G$. The difference with the usual notion of Helly's number is that instead of asking that a collection of parabolic subgroups have a non-empty intersection, we ask that their intersection contains a maximal torus.
\begin{definition}[Helly's number of a Tits building] \label{def-helly-gp}
Let $G$ be a linear algebraic group. We define \emph{Helly's number} $h(G)$ of $G$ to be the minimal positive integer $k$ such that the following holds: if $S$ is a collection of parabolic subgroups of $G$
such that the intersection of any $k$ elements in $S$ contains a maximal torus, then the intersection of all the elements in $S$ contains a maximal torus.
\end{definition}

\begin{remark}
It is not difficult to see that the above Helly's number is different from usual Helly's number for the collection of parabolic subgroups of $G$. That is, a finite intersection of parabolic subgroups may have non-empty intersection but does not contain a maximal torus. 
\end{remark}

More generally we define Helly's number of an abstract building.
\begin{definition}[Helly's number of a building]
Let $\Delta$ be a building. We define \emph{Helly's number} $h(\Delta)$ of $\Delta$ to be the minimal positive integer $k$ such that the following holds: if $S$ is a collection of simplices of $\Delta$
such that any $k$ simplices in $S$ lie in an apartment, then all of the simplices in $S$ lie in the same apartment.
\end{definition}

%In terms of Tits building, $h(G)$ is the minimal positive integer $k$ such that the following holds: if $S$ is a collection of simplices in $\Delta(G)$
%such that any $k$ elements in $S$ lie in an apartment, then all the elements in $S$ lie in an apartment.

%{\color{red} Problem: determine or give a upper bound for the Helly number $H(G)$ for any semisimple group $G$.}

In \cite[Section 6]{Klyachko}, Klyachko shows that $h(\GL(r))=r+1$. Therefore, for $G \hookrightarrow \text{GL}(r)$, we have $h(G) \le r + 1$. A natural question is how to find a sharp upper bound for $h(G)$ for any semisimple algebraic group $G$.
More generally, we pose the following problem:
\begin{problem}
For a building $\Delta$, give a sharp upper bound for Helly's number $h(\Delta)$.
\end{problem}

%{\color{red}
%State the statement about splitting of tpbs and Helly's number as a corollary.} 

From Corollary \ref{cor-eq-split}, we have the following corollary.
\begin{corollary}\label{cor-proj-split}
Let $G$ be a reductive algebraic group. Then any toric principal $G$-bundle on $\mathbb{P}^k$ splits equivariantly when $k\ge h(G)$.
\end{corollary}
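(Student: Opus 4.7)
The plan is to reduce the statement, via Corollary~\ref{cor-eq-split}, to showing that for some frame $p_0$ the associated piecewise linear map $\Phi:|\Sigma|\to\tilde{\B}(G)$ has image contained in a single extended apartment. Recall that the fan $\Sigma$ of $\mathbb{P}^k$ has exactly $k+1$ rays $\rho_0,\ldots,\rho_k$ (with primitive vectors $v_{\rho_0},\ldots,v_{\rho_k}$ summing to zero) and $k+1$ maximal cones $\sigma_0,\ldots,\sigma_k$, where $\sigma_j$ is spanned by $\{v_{\rho_i}\}_{i\neq j}$. Fix any frame $p_0\in\mathcal{P}_{x_0}$; since $G$ is reductive, Corollary~\ref{cor-KM} produces $\Phi$. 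Each lattice point $\Phi(v_{\rho_i})\in\tilde{\B}_\Z(G)$ determines, by Proposition~\ref{prop-tilde-B-one-para}, a parabolic subgroup $P_i\subset G$.

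The key step is to feed the $k+1$ parabolics $P_0,\ldots,P_k$ into the Helly property of $G$. For each maximal cone $\sigma_j$, piecewise linearity furnishes a maximal torus $H_j$ with $\Phi(\sigma_j)\subset\tilde{A}_{H_j}=\Lambda^\vee_\R(H_j)$; in particular the $k$ one-parameter subgroups $\Phi(v_{\rho_i})$ for $i\neq j$ all lie in $\Lambda^\vee(H_j)$, so the $k$ parabolics $\{P_i\}_{i\neq j}$ all contain $H_j$. Hence any $k$ of the $P_i$ share a common maximal torus; since $k\ge h(G)$, any subcollection of size $h(G)$ also shares one (as it sits inside such a $k$-subcollection). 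Definition~\ref{def-helly-gp} then yields a single maximal torus $H$ contained in all $k+1$ of the $P_i$.

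It remains to verify that $\Phi(|\Sigma|)\subset\tilde{A}_H$, at which point Corollary~\ref{cor-eq-split} delivers the equivariant splitting. The containment $H\subset P_i$ means that the equivalence class $\Phi(v_{\rho_i})$ has a representative one-parameter subgroup in $\Lambda^\vee(H)$, i.e.\ $\Phi(v_{\rho_i})\in\tilde{A}_H$. Since $\tilde{A}_H$ is a vector space containing all of the vertices $\Phi(v_{\rho_i})$, the $\R$-linear extension of $\Phi$ over each cone $\sigma_j$ is forced to land in $\tilde{A}_H$. The subtle point, and the main technical obstacle, is justifying this last step: the piecewise linearity of $\Phi$ is declared with respect to $\tilde{A}_{H_j}$, so one must argue that the two $\R$-linear extensions of $\Phi|_{\sigma_j}$, one valued in $\tilde{A}_{H_j}$ and one in $\tilde{A}_H$, agree inside $\tilde{\B}(G)$. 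This is where the building structure enters: the apartments $\tilde{A}_H$ and $\tilde{A}_{H_j}$ are glued along common faces of Weyl chambers that carry compatible linear structures, so their intersection contains the linear span of the $\Phi(v_{\rho_i})$, and the image of $\Phi|_{\sigma_j}$ indeed sits in $\tilde{A}_H$.
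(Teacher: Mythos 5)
Your proof is correct and is exactly the argument the paper has in mind: the paper states the corollary as an immediate consequence of Corollary~\ref{cor-eq-split} and Definition~\ref{def-helly-gp}, and your write-up is the natural unpacking of that claim — use the $k+1$ rays of the fan of $\mathbb{P}^k$ to produce $k+1$ parabolics $P_0,\dots,P_k$, observe that piecewise linearity over the $k+1$ maximal cones gives the Helly hypothesis, and conclude that all $P_i$ contain a common maximal torus $H$, hence $\Phi$ has image in $\tilde{A}_H$.

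One small imprecision in the final step: you write that the intersection $\tilde{A}_H\cap\tilde{A}_{H_j}$ ``contains the linear span of the $\Phi(v_{\rho_i})$.'' The intersection of two apartments in a building is a \emph{convex} subcomplex (a standard fact; see Abramenko--Brown), so in the extended building $\tilde{A}_H\cap\tilde{A}_{H_j}$ is a convex cone, not a linear subspace, and what it contains is the \emph{conical hull} $\operatorname{cone}\bigl(\Phi(v_{\rho_i}) : i\neq j\bigr)$, not the linear span. Fortunately that is exactly what you need: since $\Phi|_{\sigma_j}$ is linear and $\sigma_j=\operatorname{cone}\bigl(v_{\rho_i}:i\neq j\bigr)$, the image $\Phi(\sigma_j)$ is precisely that conical hull, hence lies in $\tilde{A}_H$. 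The compatibility of the two linear structures on the overlap, which you correctly flag as the real content, comes from the fact that the gluing maps in Definition~\ref{def-tilde-B(G)} are induced by conjugation and act linearly on cocharacter lattices (equivalently, via the one-parameter-subgroup description of Proposition~\ref{prop-tilde-B-one-para}). With ``linear span'' replaced by ``conical hull,'' the proof is complete.
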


\begin{proof}
Let $(\mathcal{P},p_0)$ be a framed toric principal $G$-bundle over $\mathbb{P}^k$. Let $\Phi:\abs{\Sigma}\to\tilde{\mathfrak{B}}(G)$ be the integral piecewise linear map corresponding to $(\mathcal{P},p_0)$ where $\Sigma$ is the fan of $\mathbb{P}^k$. In the fan $\Sigma$, there are $k+1$ rays and any collection of $k$ rays lies in some maximal cone $\sigma$. Since $\Phi(\sigma)$ lies in an extended apartment $\tilde{A}_\sigma$, we see that the images of any collection of $k$ rays lies in an extended apartment. Since $k\ge h(G)$, the image of any $h(G)$ rays also lies in an extended apartment. By the definition of $h(G)$, we then conclude that the images of all the $k+1$ rays of $\Sigma$ belong to the same apartment. Now Corollary \ref{cor-eq-split}, implies that $\mathcal{P}$ splits equivariantly.
\end{proof}
   
%\begin{proof}
%Follows from Klyachko. Need the obvious statement that intersection of a maximal torus in $GL(m)$ with $G$ is contained in a maximal torus in $G$. 
%\end{proof}

%{\color{red} Examples: state any computation or partial result about Helly's number of $\text{Sp}(2)$ and/or $\text{Sp}(2r)$.}
\begin{example}
    Let $G=\text{Sp}(2)$. Since $\text{Sp}(2)\subset \text{GL}(2)$, $h(G)\le 2+1=3$. Consider three isotropic flags
    \begin{equation*}
        \begin{split}
            F_1&=(\{0\}\subsetneqq\{e_1\}\subsetneqq \C^2)\\
            F_2&=(\{0\}\subsetneqq\{f_1\}\subsetneqq \C^2)\\
            F_3&=(\{0\}\subsetneqq\{e_1+f_1\}\subsetneqq \C^2),
        \end{split}
    \end{equation*}
    where $\{e_1,f_1\}$ is a normal basis of $\C^2$.
    Any $2$ of these flags are adapted to a normal frame, but all of them are not adapted to any normal frame. This shows $h(G)>2$. Therefore, $h(G)=3$.
\end{example}

\section{Appendix}   \label{sec-appendix}
For the sake of completeness in this appendix we give the defining axioms of an (abstract) building.
\begin{definition}[Building]    \label{def-bldg}
A building is a pair $(\Delta,\mathcal{A})$ consisting of a simplicial complex $\Delta$ and a family $\mathcal{A}$ of subcomplexes $A$ (\emph{apartments}) satisfying the following conditions:
\begin{enumerate}
        \item each simplex of $\Delta$ or any apartment $A$ is contained in a maximal simplex (\emph{chamber}), and each chamber of $\Delta$ or $A$ has the same finite dimension $n$;
        \item each apartment $A$ is connected, in the sense that for any two chambers $C, D$ in $A$ there is a sequence of chambers of $A$ starting with $C$ and ending with $D$, the intersection of any two successive members of which is an $(n-1)$-simplex;
        \item any $(n-1)$-simplex of $\Delta$ (respectively, of any apartment $A$) is contained in more than 2 chambers of $\Delta$ (respectively, in exactly 2 chambers of $A$);
        \item any two chambers $C, D$ of $\Delta$ are contained in some apartment;
        \item if two simplices $C, C'$ of $\Delta$ are contained in two apartments $A, A'$, then there is an isomorphism from $A$ onto $A'$ fixing both $C$ and $C'$ pointwise.
    \end{enumerate}
\end{definition}

Extending the construction of Tits building of a linear algebraic group as the collection of its parabolic subgroups, there is a group theoretic way to construct buildings using the notion of a \emph{Tits system} or a \emph{$(B, N)$ pair}. A Tits system is a structure on groups of Lie type and roughly speaking says that such groups have structure similar to that of the general linear group over a field. 
\begin{definition}[Tits system]
A \emph{Tits system} or \emph{$(B, N)$ pair} is a collection $(G,B,N,S)$, where $B$ and $N$ are subgroups of a group $G$ and $S$ is a subset of $N/(B\cap N)$ satisfying the following conditions:
    \begin{enumerate}
        \item $H=B\cap N$ generates $G$;
        \item $H\lhd N$;
        \item $S$ generates $W=N/H$ and consists of elements of order 2;
        \item $sBw\subset BwB\cup BswB$, $\forall s\in S, w\in W$;
        \item $sBs\not\subset B$, $\forall s\in S$.
    \end{enumerate}
\end{definition}

A subgroup of $G$ is called \emph{parabolic} if it contains a conjugate of $B$. The collection of all parabolic subgroups in a Tits system can be given the structure of a building (\cite[Section 6.2]{Abramenko-Brown}).


\begin{thebibliography}{99}
\bibitem[AB08]{Abramenko-Brown} Abramenko, P.; Brown, K. S. {\it Buildings, theory and applications}. Grad. Texts in Math. 248, Springer-Verlag, New York, 2008.
\bibitem[BDP16]{Biswas} Biswas, I.; Dey, A.; Poddar, M. {\it A classification of equivariant principal bundles over nonsingular toric varieties}. Internat. J. Math. 27 (2016), no. 14, 1650115.
\bibitem[BDP20]{Dey} Biswas, I.; Dey, A.; Poddar, M. \textit{Tannakian classification of equivariant principal bundles on toric varieties}. Transform. Groups 25 (2020), no. 4, 1009--1035.
\bibitem[BG22]{Helly} Bárány, Imre, and Gil Kalai. \textit{Helly-type problems}. Bulletin of the American Mathematical Society 59, no. 4 (2022): 471-502.
\bibitem[DKBDP]{Dasgupta} Dasgupta, J.; Khan, B.; Biswas, I.; Dey, A.; Poddar, M. \textit{Classification, reduction and stability of toric principal bundles}. arXiv:2012.13540
\bibitem[Ji12]{Ji} Ji, L. {\it Buildings and their applications in geometry and topology}. Differential geometry, 89–210, Adv. Lect. Math. (ALM), 22, Int. Press, Somerville, MA, 2012.
\bibitem[Kaneyama75]{Kaneyama}
Kaneyama, Tamafumi. {\it On equivariant vector bundles on an almost homogeneous variety}. Nagoya Math. J. 57 (1975), 65--86.
\bibitem[Kaneyama88]{Kaneyama88} Kaneyama, Tamafumi. {\it Torus-equivariant vector bundles on projective spaces}. Nagoya Mathematical Journal 111 (1988): 25-40.
\bibitem[Klyachko89]{Klyachko} Klyachko, A. A. {\it Equivariant bundles on toral varieties}. (Russian) Izv. Akad. Nauk SSSR Ser. Mat. 53 (1989), no. 5, 1001--1039, 1135; translation in Math. USSR-Izv. 35 (1990), no. 2, 337--375.
\bibitem[KM22]{Kaveh-Manon} Kaveh, Kiumars; Manon, Christopher; \textit{Toric principal bundles, piecewise linear maps and Tits buildings}. Math. Z. 302 (2022), no. 3, 1367--1392.
\bibitem[MFK94]{Mumford} 
Mumford, D.; Fogarty, J.; Kirwan, F. {\it Geometric invariant theory}. Third edition. Ergebnisse der Mathematik und ihrer Grenzgebiete (2) [Results in Mathematics and Related Areas (2)], 34. Springer-Verlag, Berlin, 1994.
\bibitem[RTW15]{RTW} R\'emy, B.; Thuillier, A.; Werner, A. \textit{Bruhat-Tits buildings and analytic geometry}. Berkovich spaces and applications, 141--202, Lecture Notes in Math., 2119, Springer, Cham, 2015.
\bibitem[Tits79]{Tits} Tits, J. \textit{Reductive groups over local fields}, Automorphic forms, representations and L-functions (Armand Borel and William A. Casselman, eds.), Proc. Symp. Pure Math., vol. XXXIII, part 1, AMS, 1979, 29--69.

\end{thebibliography}
\end{document}